

\documentclass[12pt]{amsart}
\usepackage[utf8]{inputenc}
\usepackage[lite, alphabetic, nobysame]{amsrefs}
\usepackage{amsmath,mathtools}
\usepackage{amssymb}
\usepackage{srcltx}
\usepackage{xcolor}
\usepackage{tikz}
\usepackage{xspace}
\usepackage{enumerate}
\usepackage{geometry}
\usepackage{marginnote}

\textwidth15cm
\textheight21cm
\evensidemargin.2cm
\oddsidemargin.2cm

\addtolength{\headheight}{5.2pt}    


\usepackage{amsmath, amsthm}
\usepackage{amssymb}
\usepackage{subfiles}

\usepackage{todonotes}

\usepackage{cite}

\usepackage{tikz}
\usetikzlibrary{positioning}

\theoremstyle{definition}

\newtheorem{thm}{Theorem}[section]
\newtheorem*{thm*}{Theorem}
\newtheorem{lemma}[thm]{Lemma}
\newtheorem{lem}[thm]{Lemma}
\newtheorem{defn}[thm]{Definition}
\newtheorem{claim}[thm]{Claim}
\newtheorem{prop}[thm]{Proposition}

\newtheorem{remark}[thm]{Remark}
\newtheorem{fact}[thm]{Fact}
\newtheorem{question}[thm]{Question}
\newtheorem{ex}[thm]{Example}

\renewcommand{\subset}{\subseteq}

\renewcommand{\P}{\mathbb{P}}

\newcommand\force{\Vdash}
\newcommand\R{\mathbb{R}}

\newcommand\Q{\mathbb{Q}}
\newcommand\Z{\mathbb{Z}}

\DeclareMathOperator{\dom}{dom}

\newcommand{\set}[2]{ \left\{ #1 :\, #2 \right\} }
\newcommand{\seqq}[2]{ \left\langle #1 :\, #2\right\rangle }

\title[Actions of Polish wreath products]{Strong ergodicity for the $\Gamma$-jump operator and for actions of Polish wreath products}
\author{Assaf Shani}
\date{\today}

\begin{document}

\maketitle
\begin{abstract}
Let $\Gamma$ and $\Delta$ be sufficiently distinct countable groups. We show that there is an orbit equivalence relation $E$, induced by an action of the Polish wreath product group $\Gamma\wr\Gamma$, so that $E$ is generically $F$-ergodic for any orbit equivalence relation $F$ induced by an action of $\Delta\wr\Delta$.
More generally, we establish generic ergodicity between $\Gamma$-jumps and the iterated $\Delta$-jumps, answering a question of Clemens and Coskey~\cite{Clemens-Coskey-jumps}.
The proofs follow a translation between Borel homomorphisms and definable pins, extending ideas in \cite{shani2019-strong-ergodicity} and \cite{Larson_Zapletal_2020}.
\end{abstract}

\section{Introduction}

Let $a\colon G\curvearrowright X$ be an action of a group $G$ on a space $X$. Define the \textbf{orbit equivalence relation} $E_a$, induced by the action $a$, by $x\mathrel{E_a}y\iff \exists g\in G (g\cdot x=y)$.
A common question is: which properties of $G$ are reflected in $E_a$?
Specifically, can we read off of $E_a$ any group-theoretic properties of $G$?
Put in another way, given sufficiently distinct groups $G$ and $H$, can we find an action of $G$ whose induced orbit equivalence relation is sufficiently distinct from any orbit equivalence relation induced by an action of $H$?

In the context of invariant descriptive set theory, the most common scenario is when $G$ is a Polish group, $a\colon G\curvearrowright X$ is a Borel action on a Polish space $X$, and the orbit equivalence relation $E_a$ is studied up to Borel reducibility.
By a result of Becker and Kechris~\cite{Becker_Kechris_1996}, the answers to the questions above do not change if we restrict our attention to continuous actions only. Specifically, for a Polish group $G$ and a Borel action $a\colon G\curvearrowright X$, Becker and Kechris proved that there is some Polish topology on $X$, inducing the same Borel structure, such that the action is continuous with respect to this new topology (see also \cite[4.4]{Gao09}).
We will say that an equivalence relation $E$ is induced by $G$ if it is of the form $E_a$ for some Borel (equivalently, continuous) action of $G$.

Following some recent results and questions in \cite{Clemens-Coskey-jumps}, the focus of this paper is on (full support) Polish wreath product groups of the form $\Gamma\wr\Gamma$, where $\Gamma$ is a countable group.
Let $\Lambda$ and $\Gamma$ be groups. Consider the full support product group $\Lambda^\Gamma$ and the natural shift action $\Gamma\curvearrowright\Lambda^\Gamma$. The \textbf{wreath product} $\Lambda\wr\Gamma$ is defined as the semidirect product $\Gamma\rtimes\Lambda^\Gamma$ with respect to the shift action.
If $\Lambda$ is a Polish group and $\Gamma$ is countable, then $\Lambda\wr\Gamma$ is a Polish group as well. In particular, $\Gamma\wr\Gamma$ is a Polish group for any countable discrete group $\Gamma$.

The situation is interesting already when $\Gamma$ is a ``well-understood'' countable abelian group, such as the group of integers $\Z$; the direct sum $\bigoplus_{n\in\omega}\Z_p=\Z_p^{<\omega}$ of countably many copies of $\Z_p$, where $\Z_p$ is the cyclic group of size $p$; the direct sum $\bigoplus_{p\textrm{ prime}}\Z_p$; or the quasi-cyclic $p$-group $\Z(p^\infty)$, for a prime number $p$ (this group is isomorphic to $\set{z\in\mathbb{C}}{\exists n(z^{p^n}=1)}$).

\begin{ex}\label{example: positive reductions}
For some groups $G$, essentially none of their group theoretic properties are reflected in their induced orbit equivalence relations. Here are a few important examples.
\begin{enumerate}
    \item (Kechris~\cite{Kechris-ctbl-sections-1992}) If $G$ is a locally compact group, then any orbit equivalence relation induced by a continuous action of $G$ is Borel reducible to an orbit equivalence relation induced by a countable discrete group.\footnote{It is open whether this can be true for a group which is not locally compact. See \cite{Kechris-Malicki-Panagiotopoulos-Zielinski}.}
    \item (Gao-Jackson~\cite{Gao-Jackson-abelian-2015}) Suppose $\Gamma$ is a countable discrete group. If $\Gamma$ is abelian then any orbit equivalence relation induced by $\Gamma$ is Borel reducible to $E_0$. \footnote{This is true for a larger class than the abelian groups. Finding the precise class of countable groups for which this holds is a major open problem. See \cite{conley-et-al-borel-asymptotic}.}
    It follows that for any infinite countable discrete groups $\Gamma,\Delta$, any orbit equivalence relation induced by $\Gamma$ is Borel reducible to one induced by $\Delta$. In particular, all the countable abelian groups mentioned above cannot be distinguished by their induced orbit equivalence relations. 
    \item (Ding-Gao~\cite{Ding_Gao_2017}, extending Solecki~\cite{Solecki_1995}) Fix a prime number $p$, and let $(\Z(p^\infty))^\omega$ be the full-support product of $\omega$-copies of $\Z(p^\infty)$. Then any orbit equivalence relation induced by $(\Z(p^\infty))^\omega$ is Borel reducible to $E_0^\omega$. It follows that any orbit equivalence relation induced by $(\Z(p^\infty))^\omega$ is Borel reducible to an action of $\Gamma^\omega$, for any countable infinite group $\Gamma$. In particular, the orbit equivalence relations induced by $(\Z(p^\infty))^\omega$ cannot distinguish it from groups such as $\Z^\omega$, $(\Z(q^\infty))^\omega$, or $(\Z_q^{<\omega})^\omega$.
\end{enumerate}
\end{ex}

We will focus on the opposite phenomenon, when for ``sufficiently different'' groups $G$ and $H$ there is some orbit equivalence relation induced by $G$ which is not Borel reducible to any induced by $H$.
A strong failure of Borel reducibility is given by generic-ergodicity.

Let $E$ and $F$ be equivalence relations on Polish spaces $X$ and $Y$ respectively. A map $f\colon X\to Y$ is a \textbf{homomorphism} from $E$ to $F$ if for any $x,y\in X$, $x\mathrel{E}y\implies f(x)\mathrel{F}f(y)$. If $f$ is a Borel map, we call it a \textbf{Borel homomorphism}, denoted $f\colon E\to_B F$.
Say that $E$ is \textbf{generically} $\mathbf{F}$\textbf{-ergodic} if any Borel homomorphism from $E$ to $F$ maps a comeager subset of $X$ into a single $F$-class. That is, there is a comeager $C\subset X$ so that for any $x,y\in C$, $f(x)\mathrel{F}f(y)$.
Suppose additionally that every $E$-class is meager (this will be the case in all our examples below). Then if $E$ is generically $F$-ergodic, $E$ is not Borel reducible to $F$.

Recall that $f\colon X\to Y$ a \textbf{reduction} of $E$ to $F$ if it is a homomorphism which is injective on the classes, that is, $x\mathrel{E}y\iff f(x)\mathrel{F}f(y)$. We say that $E$ is \textbf{Borel reducible to} $F$ if there is a Borel map which is a reduction of $E$ to $F$.

\begin{ex}\label{example: generic ergodicity}
\begin{enumerate}
    \item Hjorth's turbulence condition for a continuous action $a\colon G\curvearrowright X$ implies that $E_a$ is generically $F$-ergodic for any orbit equivalence relation $F$ induced by an action of a closed subgroup of $S_\infty$ (see \cite{Hjo00}, \cite[12.5]{Kechris_1997-classification-problems}, or \cite[10.4]{Gao09}). Recall that the closed subgroups of $S_\infty$ are characterized, for example, as the Polish groups which admit a countable neighborhood base of the identity comprised of open subgroups (see \cite[2.4.1]{Gao09}). On the other hand, for Polish groups such as the Banach spaces $c_0$ and $l_p$ ($1\leq p<\infty$), their natural action on $\R^\omega$ is turbulent (see \cite[10.5]{Gao09}). 
    \item Allison and Panagiotopoulos~\cite{Allison-Panagiotopoulos-2021} introduced the unbalancedness condition for a continuous action $a\colon G\curvearrowright X$, and proved that it implies that $E_a$ is generically $F$-ergodic for any orbit equivalence relation $F$ induced by a TSI group. One corollary of this result is the following. Let $\Gamma$ be a countable infinite discrete group. There is an orbit equivalence relation $E$ induced by an action of $\Gamma\wr\Gamma$ so that $E$ is generically $F$-ergodic for any orbit equivalence relation $F$ induced by an action of the product group $\Gamma^\omega$. This draws a sharp distinction between wreath product and products, in terms of their possible induced orbit equivalence relations. 
\end{enumerate}
\end{ex}
We show that for sufficiently distinct countable groups $\Gamma$ and $\Delta$ there is an orbit equivalence relation induced by $\Gamma\wr\Gamma$ which is strongly $F$-ergodic for any orbit equivalence relation $F$ induced by $\Delta\wr\Delta$.

\begin{thm}\label{thm: wreath prod strong ergodicity}
Suppose $\Gamma$ and $\Delta$ are countable groups such that for any group homomorphism $\phi$ from $\Gamma$ to a quotient of a subgroup of $\Delta$,
\begin{itemize}
    \item the Image of $\phi$ is finite;
    \item the Kernel of $\phi$ is isomorphic to $\Gamma$.
\end{itemize}
Then there is an orbit equivalence relation induced by $\Gamma\wr\Gamma$ which is strongly $F$-ergodic for any orbit equivalence relation $F$ induced by $\Delta\wr\Delta$.
\end{thm}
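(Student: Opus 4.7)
The plan is to take $E$ to be the twice-iterated $\Gamma$-jump of the equality relation on $\R$, living on the Polish space $(\R^\Gamma)^\Gamma$; this equivalence relation is canonically induced by the continuous action of $\Gamma\wr\Gamma=\Gamma\ltimes\Gamma^\Gamma$ in which the outer $\Gamma$ shifts the outer coordinate and each copy of the inner $\Gamma$ shifts one of the inner coordinates. Fixing a continuous action $a\colon\Delta\wr\Delta\curvearrowright Y$ with $F=E_a$ and a Borel homomorphism $f\colon E\to F$, I would show that $f$ maps a comeager set into a single $F$-class.

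The main tool is the correspondence between Borel homomorphisms and \emph{definable (virtual) pins}, developed in \cite{shani2019-strong-ergodicity} and \cite{Larson_Zapletal_2020}. A virtual $E$-class is represented by a pair $(\mathbb{P},\tau)$ with $\mathbb{P}\times\mathbb{P}\force \tau_{\mathrm{left}}\mathrel{E}\tau_{\mathrm{right}}$, and by generic absoluteness $f$ extends to a map sending each virtual $E$-class to a virtual $F$-class. First I would classify the virtual classes of $E$: they are indexed by a two-level $\Gamma$-torsor system of mutually Cohen-generic reals and carry a canonical residual action of $\Gamma\wr\Gamma$. Next, by the general analysis of Polish group actions in \cite{Larson_Zapletal_2020}, each virtual $F$-class has a well-defined virtual stabilizer $H\leq\Delta\wr\Delta$ (unique up to conjugation), and the $f$-image of the $\Gamma\wr\Gamma$-orbit of virtual $E$-classes produces a group homomorphism from (a finite-index subgroup of) $\Gamma\wr\Gamma$ into the normalizer quotient $N_{\Delta\wr\Delta}(H)/H$.

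Exploiting the wreath structure $\Delta\wr\Delta=\Delta\ltimes\Delta^\Delta$, I would decompose this homomorphism layer by layer to extract, at each layer, a group homomorphism from $\Gamma$ into a quotient of a subgroup of $\Delta$. The theorem's hypothesis then applies: any such homomorphism has finite image and kernel isomorphic to $\Gamma$. At the outer level, the finite-image conclusion implies that a finite-index subgroup $\Gamma_0\leq\Gamma$ acts trivially on the outer component of the virtual $F$-class, and since $\Gamma_0\cong\Gamma$ by the kernel clause, the same analysis can be iterated on the inner level. Combining, both levels of the $\Gamma\wr\Gamma$-action on the virtual class are killed on a comeager set, which unravels via the translation of \cite{shani2019-strong-ergodicity} and \cite{Larson_Zapletal_2020} to $f$ sending a comeager set into a single $F$-class.

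The main obstacle I anticipate is the careful bookkeeping of the virtual-class calculus in the two-level, non-abelian wreath setting: specifically, showing that the homomorphisms produced by comparison of virtual stabilizers really do factor through quotients of subgroups of $\Delta$ (rather than of the full $\Delta\wr\Delta$) is what makes the theorem's hypothesis exactly the right one. Disentangling the outer $\Delta$-piece from the inner $\Delta^\Delta$-piece, and iterating the rigidity argument inside the kernel of the outer map, are the steps most likely to require new technical input beyond the single-level versions already handled in \cite{Larson_Zapletal_2020} and \cite{shani2019-strong-ergodicity}.
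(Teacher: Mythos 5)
Your high-level strategy (realize the relation as $J_2^{[\Gamma]}$, pass to definable pins/virtual classes, extract a group homomorphism to which the hypothesis applies) matches the paper's in spirit, but the step that carries all the weight is asserted rather than proved, and I don't think it goes through as stated. You propose to attach to each virtual $F$-class a virtual stabilizer $H\leq\Delta\wr\Delta$ and to read off a group homomorphism from (a finite-index subgroup of) $\Gamma\wr\Gamma$ into $N_{\Delta\wr\Delta}(H)/H$, then to decompose this ``layer by layer'' into homomorphisms from $\Gamma$ to quotients of subgroups of $\Delta$. Two problems. First, no such virtual-stabilizer calculus is available for an arbitrary continuous action of $\Delta\wr\Delta$; the pin analysis that actually exists is specific to the jump operation (the classification of $F^{[\Delta]}$-pins in terms of $F^\Delta$-pins below a condition), and the paper only ever extracts a homomorphism from $\Gamma$ (not $\Gamma\wr\Gamma$) into a quotient $\tilde{\Delta}/H$ of a subgroup of $\Delta$ (not $\Delta\wr\Delta$), via a concrete ``connects'' relation between conditions of the auxiliary poset and elements of $\Delta$, made well-defined by interpolation and indiscernibility lemmas in the symmetric model $V(A)$. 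Second, your picture of a two-level decomposition on the $\Delta$ side is misleading: an orbit equivalence relation of $\Delta\wr\Delta$ does not reduce to a two-step $\Delta$-jump. The paper instead invokes the Clemens--Coskey structure theorem, by which every Borel orbit equivalence relation of a closed subgroup of $\Delta^{\wr\omega}$ (hence of $\Delta\wr\Delta$) is Borel reducible to $J_\beta^{[\Delta]}$ for \emph{some} countable ordinal $\beta$ depending on the relation, and then runs a transfinite induction on $\beta$: at each successor step the technical theorem lifts generic $F$-ergodicity of $E^{[K]}$ (for every kernel $K$, using the hypothesis $K\cong\Gamma$) to generic $F^{[\Delta]}$-ergodicity of $E^{[\Gamma]}$. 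Your proposed iteration ``inside the kernel of the outer map'' over the two levels of $\Delta\wr\Delta$ would terminate after two steps, which cannot capture relations requiring transfinite jump rank.

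A smaller point you also need: $F$ need not be Borel, so before applying any reduction to iterated jumps one must first restrict to an invariant comeager set of the domain on which the relevant orbits have uniformly bounded Borel complexity, so that the restricted relation is Borel. The paper does this explicitly. In short, the missing ideas are (a) the reduction of arbitrary $\Delta\wr\Delta$-orbit equivalence relations to transfinitely iterated $\Delta$-jumps, and (b) a worked-out mechanism producing a homomorphism from $\Gamma$ into a quotient of a subgroup of $\Delta$ from a single $\Delta$-jump layer; without these, the appeal to the theorem's group-theoretic hypothesis is not justified.
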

\begin{ex}\label{example: main examples} A few interesting examples where the conclusion of Theorem~\ref{thm: wreath prod strong ergodicity} holds are as follows.
\begin{enumerate}
    \item If all group homomorphisms from $\Gamma$ to a quotient of a subgroup of $\Delta$ are trivial.
    \item $\Gamma=\Z(p^\infty)$ and $\Delta=\Z(q^\infty)$, for distinct primes $p,q$. (In contrast to the situation for product groups $\Gamma^\omega$, $\Delta^\omega$, mentioned in Example~\ref{example: positive reductions}~(3))
    \item Let $\Gamma$ and $\Delta$ be two distinct groups among
\begin{equation*} \mathbb{Z}, \bigoplus_{p\textrm{ prime}}\mathbb{Z}_p,\textrm{ or } \mathbb{Z}_p^{<\omega}\textrm{ for some prime }p.    \end{equation*}
The only instances which do not satisfy the conditions of Theorem~\ref{thm: wreath prod strong ergodicity} are when $\Gamma=\bigoplus_{p\textrm{ prime}}\mathbb{Z}_p$. These will be dealt with in Section~\ref{subsec: example}.
\end{enumerate}
\end{ex}

The particular action of $\Gamma\wr\Gamma$ in Theorem~\ref{thm: wreath prod strong ergodicity} comes from the $\Gamma$-jump operation defined by Clemens and Coskey~\cite{Clemens-Coskey-jumps}. This is also the action used in Example~\ref{example: generic ergodicity}~(2) mentioned above.
Let $E$ be an equivalence relation on $X$, and $\Gamma$ a countable group. Clemens and Coskey defined the \textbf{$\Gamma$-jump of $E$}, denoted $E^{[\Gamma]}$, as the equivalence relation on $X^\Gamma$ defined by
\begin{equation*}
        x \mathrel{E^{[\Gamma]}} y \iff (\exists \gamma\in\Gamma) (\forall \alpha\in\Gamma) x(\gamma^{-1}\alpha) \mathrel{E} y(\alpha).
\end{equation*}
For a set $\Gamma$, the \textbf{product equivalence relation} $E^\Gamma$ is defined on $X^\Gamma$ by $x\mathrel{E^\Gamma}y\iff(\forall\gamma\in\Gamma){x(\gamma)\mathrel{E}y(\gamma)}$. 
For a countable group $\Gamma$, $\Gamma$ acts on $X^\Gamma$ via the shift action $\gamma\cdot\seqq{x_\alpha}{\alpha\in\Gamma}= \seqq{x_{\gamma^{-1}\alpha}}{\alpha\in\Gamma}$. Then
\begin{equation*}
        x \mathrel{E^{[\Gamma]}} y \iff (\exists \gamma\in\Gamma) \gamma\cdot x \mathrel{E^\Gamma} y.
\end{equation*}
The iterated $\Gamma$-jumps are defined in \cite{Clemens-Coskey-jumps} recursively as follows.
\begin{itemize}
    \item $J_0^{[\Gamma]}(E)=E$;
    \item $J_{\alpha+1}^{[\Gamma]}(E)=(J_\alpha^{[\Gamma]}(E))^{[\Gamma]}$;
    \item $J_\alpha^{[\Gamma]}(E)=\left(\bigoplus_{\beta<\alpha}J_{\beta}^{[\Gamma]}(E)\right)^{[\Gamma]}$\, for limit $\alpha$.
\end{itemize}
Finally, $J_\alpha^{[\Gamma]}$ is defined as $J_\alpha^{[\Gamma]}(\Delta(2))$, where $\Delta(2)$ is the equality relation on $\{0,1\}$.

\begin{ex}
\begin{enumerate}
    \item $J_1^{[\Gamma]}$ is the orbit equivalence relation induced by the standard shift action of $\Gamma$ on $2^\Gamma$.
    \item If $E$ is the orbit equivalence relation induced by an action of $\Lambda$, then $E^{[\Gamma]}$ is the orbit equivalence relation of a natural action of $\Lambda\wr\Gamma$. (See \cite{Clemens-Coskey-jumps}.)
    \item In particular, $J_2^{[\Gamma]}$ is an orbit equivalence relation induced by an action of $\Gamma\wr\Gamma$. This is the orbit equivalence relation witnessing Theorem~\ref{thm: wreath prod strong ergodicity}.
\end{enumerate}
\end{ex}

Clemens and Coksey~\cite{Clemens-Coskey-jumps} established a close relationship between actions of wreath products and the $\Gamma$-jump operations.
For example, a consequence of their results is the following. Suppose $E$ is a Borel equivalence relation induced by an action of $\Gamma\wr\Gamma$, for a countable group $\Gamma$.
Then $E$ is Borel reducible to $J^{[\Gamma]}_\alpha$ for some countable ordinal $\alpha$.
They also showed that the iterated $\Z$-jump $J_\alpha^{[\Z]}$ is Borel bireducible with the isomorphism relation on countable scattered linear orders of rank $1+\alpha$.

Clemens and Coskey investigated for which groups $\Gamma$ the operation $E\mapsto E^{[\Gamma]}$ is indeed a jump operator on Borel equivalence relations (see the definitions in \cite[Section 1]{Clemens-Coskey-jumps}). For example, they show that it is a jump operator for $\Z$ and $\Z_p^{<\omega}$ and it is not a jump operator for $\Z(p^\infty)$. For $\bigoplus_{p\textrm{ prime}}\Z_p$ it remains open.

One problem left open in \cite{Clemens-Coskey-jumps} is whether there are countable groups $\Gamma,\Delta$ so that some $\Gamma$-jump $J_\alpha^{[\Gamma]}$ is not Borel reducible to \emph{any} $\Delta$-jump $J_\beta^{[\Delta]}$, and vice-versa (see \cite[Question 2]{Clemens-Coskey-jumps}). 
We show that this is the case for $\Gamma,\Delta$ as in Example~\ref{example: main examples} parts (2) and (3).
Moreover, the criterion on $\Gamma$ and $\Delta$ from Theorem~\ref{thm: wreath prod strong ergodicity} (in terms of group homomorphisms) implies that $J_2^{[\Gamma]}$ is generically $J_\beta^{[\Delta]}$-ergodic, and therefore not Borel reducible to $J_\beta^{[\Delta]}$, for any countable ordinal $\beta$. This is in fact the main result behind Theorem~\ref{thm: wreath prod strong ergodicity}.

\begin{thm}\label{thm: main iterated jump ergodicity}
Suppose $\Gamma$ and $\Delta$ are as in Theorem~\ref{thm: wreath prod strong ergodicity}.
Let $E$ be a generically ergodic countable Borel equivalence relation. Then for any countable ordinal $\beta$, $E^{[\Gamma]}$ is generically $J_\beta^{[\Delta]}$-ergodic. 
In particular, $J_2^{[\Gamma]}$ is generically $J_\beta^{[\Delta]}$-ergodic for any countable ordinal $\beta$.
\end{thm}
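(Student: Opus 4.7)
The plan is to prove the theorem by transfinite induction on $\beta$, translating the existence of a nontrivial Borel homomorphism $f \colon E^{[\Gamma]} \to_B J_\beta^{[\Delta]}$ into the existence of a group homomorphism from $\Gamma$ into a quotient of a subgroup of $\Delta$ that would contradict the hypothesis. Throughout, I would rely on the definable pins framework of \cite{shani2019-strong-ergodicity} and \cite{Larson_Zapletal_2020}, which canonically assigns to each Cohen generic in the domain of $E^{[\Gamma]}$ a pinned representative of its image $J_\beta^{[\Delta]}$-class.

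First I would reduce generic ergodicity to a forcing/symmetry statement. Let $X$ be the domain of $E$. By a standard Baire-category argument it suffices to show that for two mutually Cohen-generic $x,y \in X^\Gamma$ over a countable elementary model $M$ containing a Borel code for $f$, one has $f(x) \mathrel{J_\beta^{[\Delta]}} f(y)$. The natural shift action of $\Gamma$ on $X^\Gamma$ provides, for each $\gamma\in\Gamma$, a further $M$-generic $\gamma\cdot x$ with $\gamma\cdot x \mathrel{E^{[\Gamma]}} x$; hence $f(\gamma\cdot x) \mathrel{J_\beta^{[\Delta]}} f(x)$, producing a $\Gamma$-indexed family of $J_\beta^{[\Delta]}$-equivalent Cohen generics on which the $\Gamma$-action is intrinsic. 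The base case $\beta=0$, where $J_0^{[\Delta]}$ is equality on $\{0,1\}$, follows immediately from generic ergodicity of $E$ via Kuratowski--Ulam applied to the product $E^\Gamma$.

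The heart of the argument is the inductive step. I write $J_\beta^{[\Delta]} = F^{[\Delta]}$, where $F = J_\alpha^{[\Delta]}$ (successor case) or $F = \bigoplus_{\alpha<\beta} J_\alpha^{[\Delta]}$ (limit case), so that $f(x) \in \mathrm{dom}(F)^\Delta$. From $f(\gamma\cdot x) \mathrel{J_\beta^{[\Delta]}} f(x)$ I extract, for each $\gamma\in\Gamma$, some $\delta_\gamma\in\Delta$ such that $\delta_\gamma\cdot f(x)$ is coordinatewise $F$-equivalent to $f(\gamma\cdot x)$. The setwise stabilizer $\Delta_0\leq\Delta$ of the $J_\beta^{[\Delta]}$-class of $f(x)$, read off from the pinned representative, makes $\gamma\mapsto \delta_\gamma$ well defined modulo $\Delta_0$; using that $\gamma\mapsto\gamma\cdot x$ is a $\Gamma$-action and the functoriality of the pinning, I verify that this descends to a genuine group homomorphism $\phi\colon\Gamma\to\Delta/\Delta_0$. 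Since $\Delta/\Delta_0$ is a quotient of a subgroup of $\Delta$, the hypothesis forces $\mathrm{Im}(\phi)$ finite and $\ker(\phi)\cong\Gamma$. Restricting to $\ker(\phi)$, I get a subgroup of $\Gamma$, isomorphic to $\Gamma$, that fixes the $J_\beta^{[\Delta]}$-class of $f(x)$ and therefore fixes each $F$-class $[f(x)(\delta)]_F$. Each coordinate projection then yields a Borel homomorphism from a generic-ergodic relative of $E^{[\Gamma]}$ into $F$, to which the inductive hypothesis applies to conclude that the $F$-class is generically constant, hence so is the $J_\beta^{[\Delta]}$-class of $f(x)$.

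The hard part will be the passage from the Borel selection $\gamma \mapsto \delta_\gamma$ to a genuine group-theoretic homomorphism on $\Gamma$: a priori $\delta_\gamma$ is only a Borel function of $\gamma$ and is only defined up to the stabilizer $\Delta_0$, which itself must be coherently read off from the image of the generic. The definable pins machinery is precisely designed for this bookkeeping, supplying a canonical ground-model-definable pinned name for the $J_\beta^{[\Delta]}$-class of $f(x)$ and translating symmetries of $x$ into symmetries of this pinned name, which become honest elements of $\Delta$. A secondary technical point is the limit stage: the pinned representative must pick out a single summand of $\bigoplus_{\alpha<\beta} J_\alpha^{[\Delta]}$ in a $\Gamma$-equivariant way, so that both the successor and the limit inductive steps can be handled by a single unified argument once the pin framework is in place.
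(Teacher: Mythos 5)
Your plan is essentially the paper's own proof: the paper establishes this theorem by the same transfinite induction on $\beta$, with your entire inductive step (extracting connecting elements $\delta_\gamma$, showing $\gamma\mapsto\delta_\gamma\Delta_0$ is a group homomorphism into a quotient of a subgroup of $\Delta$ with finite image, and restricting to its kernel $K\cong\Gamma$ to convert the homomorphism into one landing in $F^\Delta$, which is then handled coordinatewise by the inductive hypothesis applied to $(E^R)^{[K]}$) packaged separately as Theorem~\ref{thm: main technical gen-erg thm} and proved in the symmetric model $V(A)$. The ``hard part'' you correctly isolate --- making $\delta_\gamma$ and the stabilizer $\Delta_0$ canonical, independent of the generic and of auxiliary parameters --- is exactly what the interpolation and indiscernibility lemmas (Lemmas~\ref{lemma: interpolation} and~\ref{lemma: indiscernibility V(A)}) accomplish; the only imprecision is that the target of $\phi$ must be $\tilde{\Delta}/\Delta_0$ for the subgroup $\tilde{\Delta}\leq\Delta$ of elements actually arising as connectors, since $\Delta_0$ need not be normal in all of $\Delta$.
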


The main technical result, from which the above theorems will be deduced, is the following, lifting $F$-ergodicity to $F^{[\Delta]}$-ergodicity. In a sense, it shows that the $\Gamma$-jump and $\Delta$-jump operators are ``perpendicular'', in terms of Borel homomorphisms, for sufficiently different $\Gamma$ and $\Delta$, in terms of group homomorphisms.

\begin{thm}\label{thm: main technical gen-erg thm}
Let $\Gamma$ and $\Delta$ be countable infinite groups and $F$ an analytic equivalence relation. 
Assume that for any group homomorphism $\phi$ from $\Gamma$ to a quotient of a subgroup of $\Delta$,
\begin{itemize}
    \item the image of $\phi$ is finite;
    \item if $K$ is the Kernel of $\phi$, then $E^{[K]}$ is generically $F$-ergodic, for any generically ergodic countable Borel equivalence relation $E$.
\end{itemize}
Then $E^{[\Gamma]}$ is generically $F^{[\Delta]}$-ergodic, for any generically ergodic countable Borel equivalence relation $E$.
\end{thm}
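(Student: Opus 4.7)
My plan is to fix a Borel homomorphism $f\colon X^\Gamma \to Y^\Delta$ from $E^{[\Gamma]}$ to $F^{[\Delta]}$ and produce a comeager set on which $f$ is $F^{[\Delta]}$-constant. The overall strategy is to extract from $f$ a group homomorphism $\phi\colon \Gamma \to N_\Delta(H)/H$ for a suitable $H \leq \Delta$, apply the hypothesis to force $\phi$ to have finite image, and then reduce the problem coordinate-wise to the inductive assumption that $E^{[K]}$ is generically $F$-ergodic for $K = \ker\phi$.

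For Cohen-generic $x \in X^\Gamma$ and each $\gamma\in\Gamma$, the shift $\gamma\cdot x$ is generic and $E^{[\Gamma]}$-equivalent to $x$, so $f(\gamma\cdot x) \mathrel{F^{[\Delta]}} f(x)$ and hence there is $\delta\in\Delta$ with $\delta\cdot f(\gamma\cdot x) \mathrel{F^\Delta} f(x)$. Setting $H_x = \{\delta : \delta\cdot f(x)\mathrel{F^\Delta} f(x)\}$, a Pettis-style category argument in the spirit of \cite{shani2019-strong-ergodicity} and \cite{Larson_Zapletal_2020} should show that, on a $\Gamma$-invariant comeager set, $H_x=H$ is constant, the coset $\delta(\gamma)\cdot H$ depends only on $\gamma$, and $\phi(\gamma) := \delta(\gamma)\cdot H$ is a group homomorphism $\Gamma \to N_\Delta(H)/H$, a quotient of a subgroup of $\Delta$. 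The same category analysis, applied to generic pairs $(x,x')$ with $x \mathrel{E^\Gamma} x'$ (and using that $E$ is induced by a countable group via Feldman--Moore), forces the corresponding $\Delta$-cocycle to land in $H$ since pointwise-$E$-relatedness involves no $\Gamma$-shift, yielding $f(x) \mathrel{F^\Delta} f(x')$ on a comeager set of such pairs.

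By the theorem's hypothesis, the image of $\phi$ is finite, so $K=\ker\phi$ has finite index in $\Gamma$ and $f(k\cdot x) \mathrel{F^\Delta} f(x)$ for $k \in K$ on a comeager set. For each $\alpha\in\Delta$, the Borel coordinate map $f_\alpha(x) = f(x)(\alpha)$ then satisfies $f_\alpha(k\cdot x) \mathrel{F} f_\alpha(x)$ for $k\in K$ and $f_\alpha(x) \mathrel{F} f_\alpha(x')$ for pointwise-$E$-related generic $x,x'$. Using Kuratowski--Ulam to fix generic values on the finitely many $K$-cosets in $\Gamma\setminus K$, $f_\alpha$ descends to a Borel homomorphism $X^K \to Y$ from $E^{[K]}$ to $F$, and the inductive hypothesis yields, for each $\alpha$, a comeager $C_\alpha\subset X^\Gamma$ on which $f_\alpha$ takes a single $F$-class $[y_\alpha]$. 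Intersecting the $C_\alpha$ over $\alpha \in \Delta$ gives a comeager $C\subset X^\Gamma$ with $f(x)\mathrel{F^\Delta}(y_\alpha)_{\alpha\in\Delta}$, hence $f(x) \mathrel{F^{[\Delta]}} (y_\alpha)_{\alpha\in\Delta}$, for all $x\in C$.

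The main obstacle is extracting the genuine group homomorphism $\phi$ with constant stabilizer $H$ on a comeager set. Since $F$ is only analytic, $H_x$ is a priori not Borel in $x$, and one must argue inside the pinned-names / definable-pin framework of \cite{shani2019-strong-ergodicity} and \cite{Larson_Zapletal_2020} rather than rely on naive Borel uniformization; the proof needs the $\Delta$-action on $F^\Delta$-classes together with mutual-genericity arguments over the Cohen forcing on $X^\Gamma$. A secondary delicacy is verifying that $f_\alpha$ truly descends to an $E^{[K]}$-to-$F$ homomorphism rather than to a diagonal product of $[\Gamma:K]$ copies of it; this is where the finite-index bookkeeping afforded by $\phi$ having finite image is crucial.
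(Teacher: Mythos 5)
Your high-level strategy is the same as the paper's: extract from the homomorphism a group homomorphism from $\Gamma$ into a quotient of a subgroup of $\Delta$, invoke the hypothesis to get finite image, pass to the kernel $K$ to turn the map into a homomorphism landing in $F^\Delta$ rather than $F^{[\Delta]}$, and then conclude coordinate-wise. However, the crux is exactly the step you defer to ``a Pettis-style category argument in the spirit of'' the references: that on a comeager set the stabilizer $H_x=\set{\delta}{\delta\cdot f(x)\mathrel{F^\Delta}f(x)}$ is constant, that the connecting coset $\delta(\gamma)H$ depends only on $\gamma$ and not on $x$, and that for pointwise-$E^\Gamma$-related pairs the cocycle lands in $H$. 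None of this is available by a direct category argument here: $F$ is an arbitrary analytic equivalence relation, the witnessing $\delta$ admits no Borel selection, and $F^\Delta$ on the range is not the orbit equivalence relation of any action you control. The paper's entire apparatus exists to supply precisely this step: Lemma~\ref{lem;symm-model-ergod} converts the homomorphism into a pin $(\P,\sigma)$ definable from the invariant $A$ in the symmetric model, Lemma~\ref{lemma: pins from jump and product} localizes it to an $F^\Delta$-pin, and the independence of the connecting coset from the ``base point'' is obtained by the Interpolation Lemma~\ref{lemma: interpolation} (comparing two definitions with disjointly supported generic parameters) together with the Indiscernibility Lemma~\ref{lemma: indiscernibility V(A)}. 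As written, your proposal asserts the theorem's hardest content rather than proving it.

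There is also a concrete error in the descent to $K$. You propose to use Kuratowski--Ulam to freeze generic values on the coordinates in $\Gamma\setminus K$ and obtain a homomorphism $X^K\to Y$ from $E^{[K]}$ to $F$. This fails: the shift by $k\in K$ on $X^\Gamma$ moves the coordinates in $\Gamma\setminus K$ as well (it permutes each coset $\gamma K$ setwise), so the section $w\mapsto f_\alpha(w,z)$ with $z$ frozen is not $K$-equivariant, and $(k\cdot w,z)$ is not generically $E^\Gamma$-related to $(k\cdot w,k\cdot z)$. The correct move (Lemma~\ref{lemma: technical jumps normal subgroup} in the paper) is to regroup coordinates via the homeomorphism $X^\Gamma\cong (X^R)^K$, where $R$ is a finite set of coset representatives; one then obtains a homomorphism from $(E^R)^{[K]}$ --- not $E^{[K]}$ --- to $F^\Delta$, and this is covered by the hypothesis only because the hypothesis is quantified over all generically ergodic countable Borel equivalence relations and $E^R$ is again one such. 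Your final coordinate-wise intersection over $\alpha\in\Delta$ is fine once these two points are repaired.
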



\begin{question}
\begin{enumerate}
    \item Characterize the countable infinite groups $\Gamma,\Delta$ for which there is an orbit equivalence relation induced by $\Gamma\wr\Gamma$ which is generically $F$-ergodic with respect to any orbit equivalence $F$ induced by $\Delta\wr\Delta$.
    \item Characterize the countable infinite groups $\Gamma,\Delta$ for which there is some countable ordinal $\alpha$ so that $J_\alpha^{[\Gamma]}$ is not Borel redicuble to any $J_\beta^{[\Delta]}$.
\end{enumerate}
\end{question}
\begin{remark}
\cite[Theorem 1.3]{shani2019-strong-ergodicity} characterizes the countable infinite groups $\Gamma,\Delta$ for which $J_2^{[\Gamma]}$ is generically $J_2^{[\Delta]}$-ergodic.
\end{remark}

\begin{question}
Let $\Gamma$ be a countable infinite group. Is $J_3^{[\Gamma]}$ Borel reducible to an orbit equivalence relation induced by $\Gamma\wr\Gamma$?
\end{question}

The proofs below rely on an analysis of definable sets in a particular symmetric model, where the axiom of choice fails. (Even very weak fragments of choice, such as the axiom of dependent choices for reals, fail in this model). The following lemma provides the translation.

\begin{lem}\label{lemma: symmetric-model-generic-ergod-introduction}
Let $E$ be a generically ergodic analytic equivalence relation on a Polish space $X$, which is classifiable by countable structures. Let $F$ be an analytic equivalence relation on a Polish space $Y$. There is a symmetric model of the form $V(A)$ (for some set $A$ in a Cohen-real extension of a ground model $V$), and a poset $\P$ in $V(A)$, so that the following are equivalent.
\begin{enumerate}
    \item $E$ is generically $F$-ergodic;
    \item suppose $\sigma\in V(A)$ is a $\P$-name so that $(\P,\sigma)$ is an $F$-pin and $\sigma$ is definable in $V(A)$ from $A$ over $V$. Then there is $y_0\in Y\cap V$ so that $\P\force \sigma\mathrel{F} \check{y}_0$.
\end{enumerate}
\end{lem}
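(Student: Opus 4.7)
The plan is to adapt the Larson--Zapletal/Shani framework (\cite{Larson_Zapletal_2020}, \cite{shani2019-strong-ergodicity}) to produce a symmetric model in which ``a generic $E$-class'' is represented by a set $A$, and in which Borel homomorphisms $E\to F$ correspond precisely to $V(A)$-definable $F$-pins over $A$. Since $E$ is classifiable by countable structures, we may replace it (up to Borel bireducibility) with an orbit equivalence relation of a Polish group action on $X$. Fix a transitive ground model $V$ containing codes for $E$ and $F$. Let $c\in X$ be Cohen-generic over $V$ and set $A:=[c]_E$, an element of $V[c]\setminus V$. The intended symmetric model is $V(A)$, and the intended poset $\P\in V(A)$ is the natural forcing whose generic filters produce new elements of $A$; any such generic $c_g\in A$ is itself Cohen-generic over $V$, and $V(A)[c_g]=V[c_g]$.

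For the forward direction, assume $E$ is generically $F$-ergodic and let $(\P,\sigma)$ be an $F$-pin with $\sigma$ definable in $V(A)$ from $A$ over $V$. Unfolding the definition, the name $\sigma$ corresponds to a Borel function $f\colon X\to Y$ coded in $V$ with $\sigma[c_g]=f(c_g)$ for any $\P$-generic $c_g$. The pin condition, applied to two mutually $\P$-generic $c_1,c_2\in A$, forces $f(c_1)\mathrel{F}f(c_2)$, which on the dense set of Cohen conditions shows that $f$ is a homomorphism from $E$ to $F$ on a comeager set. Generic $F$-ergodicity then produces some $y_0\in Y$ and a comeager $C\subseteq X$ with $f(x)\mathrel{F}y_0$ for all $x\in C$; by Shoenfield-type absoluteness we may take $y_0\in V$. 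Since every Cohen-generic real over $V$ lies in every $V$-coded comeager set, $\P\force\sigma\mathrel{F}\check y_0$.

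For the backward direction, let $f\colon X\to Y$ be a Borel homomorphism from $E$ to $F$, coded in $V$, and let $\sigma$ be the $\P$-name $f(\dot c)$. Because $f$ is an $E$-to-$F$ homomorphism, two mutually $\P$-generic filters produce $E$-equivalent elements of $A$ and hence $F$-equivalent values of $f$, so $(\P,\sigma)$ is an $F$-pin; moreover, the $F$-class of $\sigma$ depends only on $A$, so $\sigma$ is definable from $A$ over $V$. By hypothesis there is $y_0\in V$ with $\P\force\sigma\mathrel{F}\check y_0$, which by Cohen genericity translates to $f(x)\mathrel{F}y_0$ on a comeager subset of $X$, i.e., generic $F$-ergodicity of $E$.

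The main technical obstacle is the unfolding step in the forward direction: extracting a $V$-coded Borel function $f$ from a $V(A)$-definable $\P$-name $\sigma$. One must control the parameters from $A$ that could appear in such a definition and argue, despite the failure of choice in $V(A)$, that the name unfolds to an analytic (hence Borel-on-a-comeager-set) function coded in $V$. This is exactly where classifiability of $E$ by countable structures enters: it gives the structural control on $A$ and on the action of the associated automorphism group that makes this unfolding possible, in the spirit of the analysis of definable pins in \cite{Larson_Zapletal_2020}.
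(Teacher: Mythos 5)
Your direction (2)$\Rightarrow$(1) is essentially the paper's argument: given a Borel homomorphism $f$ coded in $V$, the name $\sigma=f(\tau)$ is an $F$-pin definable from $A$ over $V$, the hypothesis yields $y_0\in V$, and a mutual-genericity argument over a countable model transfers $f(z)\mathrel{F}y_0$ to a comeager set of Cohen generics $z$. One correction to the setup: for a general analytic $E$ classifiable by countable structures you should take $A$ to be the \emph{absolute} classifying invariant $A_c$ coming from the Scott analysis, not the literal class $[c]_E$, which can acquire new members in further extensions; the classifiability hypothesis is used precisely to have such an absolute invariant (and a poset $\P$ with $\P\force A_\tau=A$), not to control an automorphism group.

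The genuine gap is in the direction (1)$\Rightarrow$(2), at exactly the step you flag as the main obstacle: the claim that a $V(A)$-definable $\P$-name $\sigma$ ``unfolds'' to a ground-model Borel function $f$ with $\sigma[c_g]=f(c_g)$. There is no reason for this to hold literally -- $\sigma[H]$ is an element of $V(A)[H]$ whose identity (as opposed to its $F$-class) need not be computable from the generic by any $V$-coded Borel map -- and classifiability of $E$ does not provide such an unfolding. The paper's route is different: since $\P$ is countable, hence reasonable, in the Cohen extension $V[x]$ where choice holds, the Larson--Zapletal theorem that reasonable posets carry only trivial pins applies \emph{in $V[x]$} and produces $y\in V[x]$ with $V[x]\models\P\force\sigma\mathrel{F}\check{y}$. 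One then takes a Cohen name $\mu$ for $y$ and defines the partial Borel function $f(z)=\mu[z]$ on the set of $z$ that are Cohen-generic over a suitable countable model and extend the condition forcing the above. That this $f$ is a homomorphism from $E$ to $F$ is then verified from the definability of $(\P,\sigma)$ from $A$ alone: if $z_1\mathrel{E}z_2$ then $A_{z_1}=A_{z_2}$, so the \emph{same} pin is defined over both, and $\mu[z_1]$, $\mu[z_2]$ are each $F$-equivalent to $\sigma[H]$ for a common generic $H$, hence to each other. Generic $F$-ergodicity applied to this $f$ then gives $y_0$ with $\mu[x]\mathrel{F}y_0$, whence $\P\force\sigma\mathrel{F}\check{y}_0$. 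Without this triviality-of-pins input (or some substitute for it), your unfolding step does not go through.
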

See Section~\ref{section: prelims - classifying invariants} for any undefined terms. The lemma is proven below (Lemma~\ref{lem;symm-model-ergod}) in greater generality, in the context of proper ideals, following \cite{Zapletal-2008-forcing-idealized}. 
The lemma extends \cite[Lemma~2.5]{shani2019-strong-ergodicity}, in which the equivalence relation $F$ was assumed to be classifiable by countable structures as well. The key point is using pins, following \cite{Larson_Zapletal_2020}, instead of only classifying invariants. This allows us to deal with homomorphisms to arbitrary analytic equivalence relations in Theorem~\ref{thm: main technical gen-erg thm}.


\section{Proofs of Theorem~\ref{thm: wreath prod strong ergodicity}, Theorem~\ref{thm: main iterated jump ergodicity}, and Example~\ref{example: main examples}}

Before proceeding to the proof of Theorem~\ref{thm: main technical gen-erg thm}, we explain here how the rest of the results mentioned in the introduction follow from it, and from the results in \cite{Clemens-Coskey-jumps}.

\subsection{Theorem~\ref{thm: main iterated jump ergodicity}}\label{subsec: proof of iterated jump erg}

Fix $\Gamma$ and $\Delta$ as in Theorem~\ref{thm: wreath prod strong ergodicity}. Fix a generically ergodic countable Borel equivalence relation $E$. We show by induction on $\alpha<\omega_1$ that $E^{[\Gamma]}$ is generically $J^{[\Delta]}_\alpha$-ergodic.
Note that, since $E$ is generically ergodic, so is $E^{[\Gamma]}$, and so it is generically $J_0^{[\Delta]}$-ergodic.

Assume that $E^{[\Gamma]}$ is generically $J^{[\Delta]}_\alpha$-ergodic. 
For any group homomorphism $\phi$ from $\Gamma$ to a quotient of a subgroup of $\Delta$, by assumption, the image of $\phi$ is finite, and if $K$ is the Kernel of $\phi$, then $K\simeq \Gamma$. In particular, $E^{[K]}$ is generically $J^{[\Delta]}_\alpha$-ergodic. 
It follows from Theorem~\ref{thm: main technical gen-erg thm} that $E^{[\Gamma]}$ is generically $J^{[\Delta]}_{\alpha+1}=(J^{[\Delta]}_\alpha)^{[\Delta]}$-ergodic. 

Assume now that $\alpha$ is a countable limit ordinal and assume that $E^{[\Gamma]}$ is generically $J^{[\Delta]}_\beta$-ergodic for any $\beta<\alpha$. 
Then $E^{[\Gamma]}$ is generically $\bigoplus_{\beta<\alpha}J_{\beta}^{[\Delta]}$-ergodic.
As before, it follows from Theorem~\ref{thm: main technical gen-erg thm} that $E^{[\Gamma]}$ is generically $J_{\alpha}^{\Delta}=\left(\bigoplus_{\beta<\alpha}J_{\beta}^{[\Delta]}\right)^{[\Delta]}$-ergodic.

\subsection{Theorem~\ref{thm: wreath prod strong ergodicity}}\label{subsec: proof of main wreath prod erg} 
Note first that it is sufficient to show generic ergodicity with respect to more complex equivalence relations. Specifically, if $E,F_1,F_2$ are equivalence relations on Polish spaces $X,Y_1,Y_2$ respectively, $E$ is generically $F_2$-ergodic, and $F_1$ is Borel reducible to $F_2$, then $E$ is generically $F_1$-ergodic. 
Note also that if $E$ is generically $F$-ergodic, and $f$ is a partial Borel homomorphism defined on an invariant comeager subdomain of $E$, then $f$ must send a comeager set into a single $F$-class.

Let $\Gamma$ and $\Delta$ be as in Theorem~\ref{thm: wreath prod strong ergodicity}.
Recall the infinite wreath-product $\Delta^{\wr\omega}$ as in \cite{Clemens-Coskey-jumps}. $\Delta^{\wr\omega}$ is isomorphic to $\mathrm{Aut}(T_\Delta)$, the automorphism group of the full $\Delta$-tree (see \cite[Section~4]{Clemens-Coskey-jumps}.) Note that $\Delta\wr\Delta$ is isomorphic to a closed subgroup of $\Delta^{\wr\omega}$. The following proposition implies the conclusion of Theorem~\ref{thm: wreath prod strong ergodicity}.
\begin{prop}
$J_2^{[\Gamma]}$ is generically $F$-ergodic for any orbit equivalence $F$ induced by an action of a closed subgroup of $\Delta^{\wr\omega}$.
\end{prop}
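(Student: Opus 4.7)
The plan is to reduce the proposition to Theorem~\ref{thm: main iterated jump ergodicity} by showing that every orbit equivalence relation $F$ induced by a continuous action of a closed subgroup of $\Delta^{\wr\omega}$ is Borel reducible to the iterated jump $J_\beta^{[\Delta]}$ for some countable ordinal $\beta$. Once this reducibility is in hand, the proposition follows by combining (i) Theorem~\ref{thm: main iterated jump ergodicity}, which gives generic $J_\beta^{[\Delta]}$-ergodicity of $J_2^{[\Gamma]}$ for every $\beta<\omega_1$, with (ii) the monotonicity observation recorded at the start of Section~\ref{subsec: proof of main wreath prod erg} (generic ergodicity with respect to a Borel-larger target passes to any Borel-smaller one).

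The first step is a standard coset-induction construction. Given a closed subgroup $H\leq \Delta^{\wr\omega}$ acting continuously on a Polish space $Y$, form the quotient $Z:=(\Delta^{\wr\omega}\times Y)/{\sim}$, where $(gh,y)\sim(g,h\cdot y)$ for $h\in H$, and note that $\Delta^{\wr\omega}$ acts on $Z$ by left multiplication in the first coordinate. Since $\Delta^{\wr\omega}$ is a closed subgroup of $S_\infty$, the coset space $\Delta^{\wr\omega}/H$ admits a Borel transversal; hence $Z$ is a standard Borel space (in fact, Polish in a suitable topology). The map $y\mapsto [e,y]$ is then a Borel reduction from $E_H^Y$ to the orbit equivalence relation of $\Delta^{\wr\omega}\curvearrowright Z$. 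So, for the purpose of Borel reducibility, one may assume $F$ is induced by a continuous action of $\Delta^{\wr\omega}$ itself.

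The second step is to invoke the cofinality analysis of iterated $\Delta$-jumps from \cite{Clemens-Coskey-jumps}: under the identification $\Delta^{\wr\omega}\cong\mathrm{Aut}(T_\Delta)$ in their Section~4, every orbit equivalence relation induced by a Borel action of $\Delta^{\wr\omega}$ is Borel reducible to $J_\beta^{[\Delta]}$ for some countable ordinal $\beta$, exactly parallel to the $\Gamma\wr\Gamma$ statement cited in the introduction of the present paper. Combined with the first step, this yields $F\leq_B J_\beta^{[\Delta]}$, and the third step above then completes the proof.

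The main obstacle is the second step: extracting the precise cofinality statement for $\Delta^{\wr\omega}$-actions from \cite{Clemens-Coskey-jumps}. The ingredients are all present there (the isomorphism with $\mathrm{Aut}(T_\Delta)$, the rank-based analysis of iterated jumps, and the bound on complexity of orbit equivalence relations induced by iterated wreath products), but a brief additional bookkeeping argument may be needed to assemble them into the form used above. The first step is routine once a Borel transversal for $\Delta^{\wr\omega}/H$ is fixed, and the third step is formal.
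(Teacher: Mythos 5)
Your outline matches the paper's strategy for the \emph{Borel} case, but it has a genuine gap: an orbit equivalence relation induced by a continuous action of a closed subgroup of $\Delta^{\wr\omega}$ (or of $\Delta^{\wr\omega}$ itself, after your induction step) is in general only analytic, not Borel, and the cofinality statement you invoke from \cite{Clemens-Coskey-jumps} is a statement about \emph{Borel} orbit equivalence relations. This is not a bookkeeping issue that extra care with their Section~4 will fix: each $J_\beta^{[\Delta]}$ is itself Borel, so any equivalence relation Borel reducible to some $J_\beta^{[\Delta]}$ must be Borel. Hence the claim ``every orbit equivalence relation induced by a Borel action of $\Delta^{\wr\omega}$ is Borel reducible to some $J_\beta^{[\Delta]}$'' is false whenever the orbit equivalence relation fails to be Borel, and your second step collapses exactly in the case that needs an argument.

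The paper closes this gap by exploiting that generic ergodicity only concerns the behaviour of a Borel homomorphism $f\colon J_2^{[\Gamma]}\to_B F$ on a comeager set. For each $x$ the orbit $[f(x)]_F$ is Borel, so one can find an invariant comeager $X_0$ and a countable ordinal $\gamma$ with $[f(x)]_F\in\mathbf{\Pi}^0_\gamma$ for all $x\in X_0$; letting $Y_0$ be the $F$-invariant set of $y$ with $[y]_F\in\mathbf{\Pi}^0_\gamma$, the restriction $F\restriction Y_0$ is a Borel orbit equivalence relation containing the image of $X_0$, and only now does the Clemens--Coskey theorem apply to give $F\restriction Y_0\leq_B J_\beta^{[\Delta]}$, after which Theorem~\ref{thm: main iterated jump ergodicity} finishes as you say. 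Your first step (inducing from a closed subgroup $H$ up to $\Delta^{\wr\omega}$ via the coset construction) is correct but unnecessary here, since the Clemens--Coskey result is already applied in the paper to closed subgroups of $\Delta^{\wr\omega}\cong\mathrm{Aut}(T_\Delta)$ directly; in any case it does not help with the Borelness problem, because the induced orbit equivalence relation is non-Borel exactly when the original one is.
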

Let $F$ be an orbit equivalence relation induced by an action of a closed subgroup of $\Delta^{\wr\omega}$. If $F$ is Borel, then, by \cite[Theorem~2]{Clemens-Coskey-jumps}, $F$ is Borel reducible to $J_\beta^{[\Delta]}$ for some countable ordinal $\beta$, and the result follows from Theorem~\ref{thm: main iterated jump ergodicity}.

For the general case, we may replace $F$ with a Borel equivalence relation by passing to a comeager subdomain of $J_2^{[\Gamma]}$. We use some standard facts about orbit equivalence relations, as in \cite{Gao09} or \cite{Becker_Kechris_1996}.
Let $f$ be a Borel homomorphism from $J_2^{[\Gamma]}$ to $F$, where $F$ is the orbit equivalence relation induced by some action of $G$ on $Y$, where $G$ is a closed subgroup of $\Delta^{\wr\omega}$.
Let $X$ be the domain of $J_2^{[\Gamma]}$. 
For each $x\in X$, the orbit $[f(x)]_F$ is Borel.
We may find an invariant comeager set $X_0\subset X$ and a countable ordinal $\gamma$ so that the orbit $[f(x)]_F$ is $\mathbf{\Pi}^0_\gamma$ for any $x\in X_0$.
Let $Y_0$ be the set of $y\in Y$ so that $[y]_F$ is $\mathbf{\Pi}^0_\gamma$. Then $Y_0$ is invariant under $F$, $F\restriction Y_0$ is a Borel equivalence relation, and $f(x)\in Y_0$ for any $x\in X_0$.
Now $F\restriction Y_0$ is the orbit equivalence relation induced by a Borel action of $G$, so by \cite[Theorem 2]{Clemens-Coskey-jumps}, $F\restriction Y_0$ is Borel reducible to $J_\beta^{[\Delta]}$ for some countable ordinal $\beta$.
The result now follows from Theorem~\ref{thm: main iterated jump ergodicity}.

\subsection{Example~\ref{example: main examples}}\label{subsec: example}

The only instances of Example~\ref{example: main examples} which do not satisfy the conditions of Theorem~\ref{thm: wreath prod strong ergodicity} are when $\Gamma=\bigoplus_{p\textrm{ prime}}\mathbb{Z}_p$ and either
\begin{enumerate}
    \item $\Delta=\mathbb{Z}_q^{<\omega}$, or
    \item $\Delta=\Z$.
\end{enumerate}
 
Fix a generically ergodic countable Borel equivalence relation $E$. We prove that $E^{[\Gamma]}$ is generically $J_\beta^{[\Delta]}$-ergodic for any countable ordinal $\beta$ (establishing the conclusion of Theorem~\ref{thm: main iterated jump ergodicity}). The conclusion of Theorem~\ref{thm: wreath prod strong ergodicity}, that the orbit equivalence relation $J_2^{[\Gamma]}$ is not Borel reducible to any orbit equivalence relation induced by $\Delta\wr\Delta$, then follows as in Section~\ref{subsec: proof of main wreath prod erg} above. 

Consider first case (1). Note that $\bigoplus_{p\textrm{ prime}}\mathbb{Z}_p=(\bigoplus_{q\neq p}\mathbb{Z}_p)\times \mathbb{Z}_q$, and any group homomorphism from $\bigoplus_{q\neq p}\mathbb{Z}_p$ to a quotient of a subgroup of $\mathbb{Z}_q^{<\omega}$ is trivial.
It therefore suffices to show the following.
\begin{prop}
Suppose $\Gamma=\Gamma_0\times G$ where $G$ is a finite group and all group homomorphisms from $\Gamma_0$ to a quotient of a subgroup of $\Delta$ are trivial. Let $E$ be a generically ergodic countable Borel equivalence relation. Then $E^{[\Gamma]}$ is generically $J_\alpha^{[\Delta]}$-ergodic for all $\alpha<\omega_1$. 
\end{prop}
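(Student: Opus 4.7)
The plan is to imitate the induction in Section~\ref{subsec: proof of iterated jump erg}, invoking Theorem~\ref{thm: main technical gen-erg thm} at each successor and limit step, after first making a short group-theoretic observation: the class $\mathcal{C}$ of groups of the form $\Gamma_0 \times G$ (with $G$ a finite group and $\Gamma_0$ as in the hypothesis) is closed under taking kernels of homomorphisms into quotients of subgroups of $\Delta$. This closure property replaces the assumption ``$K \cong \Gamma$'' that was used in the original induction.

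To verify the closure, let $\phi\colon \Gamma_0 \times G \to H$ be a group homomorphism with $H$ a quotient of a subgroup of $\Delta$. The restriction of $\phi$ to $\Gamma_0 \times \{e\}$ is a homomorphism from $\Gamma_0$ into such an $H$, hence trivial by hypothesis, so $\Gamma_0 \times \{e\} \subseteq \ker\phi$. Setting $G' := \{g \in G : \phi(e,g) = e\}$, a subgroup of $G$, one gets $\ker\phi = \Gamma_0 \times G'$, which again lies in $\mathcal{C}$. Moreover $\mathrm{Im}(\phi) \cong G/G'$ is finite, so both bullet points of Theorem~\ref{thm: main technical gen-erg thm} can potentially be verified by induction.

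I would then prove by induction on $\alpha<\omega_1$, uniformly in $\Gamma \in \mathcal{C}$ and in the generically ergodic countable Borel $E$, that $E^{[\Gamma]}$ is generically $J_\alpha^{[\Delta]}$-ergodic. The base case $\alpha=0$ follows from generic ergodicity of $E^{[\Gamma]}$, which is inherited from $E$. For the successor step, given the induction hypothesis at $\alpha$ applied to \emph{every} member of $\mathcal{C}$, the closure observation says that every kernel $K$ of a relevant $\phi\colon \Gamma \to H$ lies in $\mathcal{C}$; thus $E'^{[K]}$ is generically $J_\alpha^{[\Delta]}$-ergodic for every generically ergodic countable Borel $E'$. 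Theorem~\ref{thm: main technical gen-erg thm} with $F = J_\alpha^{[\Delta]}$ then yields that $E^{[\Gamma]}$ is generically $J_{\alpha+1}^{[\Delta]}$-ergodic. The limit case is entirely analogous: combine generic ergodicity with respect to each $J_\beta^{[\Delta]}$ for $\beta < \alpha$ into generic ergodicity with respect to $\bigoplus_{\beta<\alpha} J_\beta^{[\Delta]}$, and apply Theorem~\ref{thm: main technical gen-erg thm} again to conclude generic $J_\alpha^{[\Delta]}$-ergodicity. The reduction of Theorem~\ref{thm: wreath prod strong ergodicity} to this statement then proceeds as in Section~\ref{subsec: proof of main wreath prod erg}, using \cite[Theorem~2]{Clemens-Coskey-jumps} to replace a Borel homomorphism target by some $J_\beta^{[\Delta]}$.

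I do not expect any serious obstacle here: the entire content, beyond bookkeeping, is the closure-under-kernels observation. The only minor care needed is to formulate the induction uniformly in all $\Gamma \in \mathcal{C}$ (not just the single group at hand), so that the induction hypothesis is directly available for the kernels $K \in \mathcal{C}$ appearing in the application of Theorem~\ref{thm: main technical gen-erg thm}.
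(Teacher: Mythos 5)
Your proposal is correct and follows essentially the same route as the paper: an induction on $\alpha$, uniform over the class of groups $\Gamma_0\times G'$ ($G'$ finite), using the observation that kernels of homomorphisms into quotients of subgroups of $\Delta$ stay in this class and have finite image, and then invoking Theorem~\ref{thm: main technical gen-erg thm} at successor and limit stages. Your explicit verification that $\ker\phi=\Gamma_0\times G'$ with $\mathrm{Im}(\phi)\cong G/G'$ finite is exactly the (stated but unproved) group-theoretic step in the paper's argument.
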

\begin{proof}
The proof is inductive, as in Section~\ref{subsec: proof of iterated jump erg}. Suppose the proposition holds for $\alpha$.
For any group homomorphism $\phi\colon \Gamma\to\Delta$, its kernel $K$ is equal to $\Gamma_0\times G'$ for some finite group $G'$.
In particular, $K=\Gamma_0\times G'$ and $\Delta$ satisfy the assumptions of the propositions, so $E^{[K]}$ is generically $J_\alpha^{[\Delta]}$-ergodic, by the inductive assumption.
Note also that any group homomorphism from $\Gamma$ to a quotient of a subgroup of $\Delta$ has finite image.
By Theorem~\ref{thm: main technical gen-erg thm} we conclude that $E^{[\Gamma]}$ is generically $J_{\alpha+1}^{[\Delta]}$-ergodic. The limit case is similar. 
\end{proof}

The proof of case (2) is similar. We carry the following stronger inductive hypothesis at stage $\alpha$:
\begin{claim}
For any infinite set of primes $P$, $E^{[\bigoplus_{p\in P}\Z_p]}$ is generically $J_\alpha^{[\Delta]}$-ergodic.
\end{claim}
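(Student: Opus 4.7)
The plan is to prove the claim by transfinite induction on $\alpha$, closely mirroring the structure of the inductive proof in Section~\ref{subsec: proof of iterated jump erg} and of the proposition just proved for case (1). The base case $\alpha=0$ holds because $E$ is generically ergodic and $\bigoplus_{p\in P}\Z_p$ is infinite, so the product-like relation $E^{[\bigoplus_{p\in P}\Z_p]}$ is generically ergodic, i.e., generically $J_0^{[\Z]}$-ergodic.

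For the successor step, assume the claim at stage $\alpha$ for every infinite set of primes. Setting $\Gamma=\bigoplus_{p\in P}\Z_p$, I would apply Theorem~\ref{thm: main technical gen-erg thm} with $\Delta=\Z$ and $F=J_\alpha^{[\Z]}$. This requires checking that for any group homomorphism $\phi$ from $\Gamma$ to a quotient of a subgroup of $\Z$, the image is finite and $E^{[K]}$ is generically $J_\alpha^{[\Z]}$-ergodic, where $K=\ker(\phi)$. Subgroups of $\Z$ are cyclic, and their quotients are either $\Z$ itself or a finite cyclic group $\Z_k$. Since $\Gamma$ is torsion, any homomorphism into $\Z$ is trivial, so the image is $\{0\}$ and $K=\Gamma$, and the inductive hypothesis applied to the same $P$ gives what we need. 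For a homomorphism $\phi\colon\Gamma\to\Z_k$ with $k=p_1^{a_1}\cdots p_r^{a_r}$, the image is automatically finite. The key kernel computation is that for $p\in P\setminus\{p_1,\dots,p_r\}$, the restriction $\phi\restriction\Z_p$ must be zero (since $\Z_k$ has no element of order $p$), and for the finitely many $p\in\{p_1,\dots,p_r\}\cap P$, the restriction $\phi\restriction\Z_p$ is either zero or injective (as $\Z_p$ is simple). Hence $K=\bigoplus_{p\in P'}\Z_p$ for some cofinite $P'\subset P$, which remains an infinite set of primes, so the inductive hypothesis applies to $K$ and yields that $E^{[K]}$ is generically $J_\alpha^{[\Z]}$-ergodic. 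Theorem~\ref{thm: main technical gen-erg thm} then delivers that $E^{[\Gamma]}$ is generically $J_{\alpha+1}^{[\Z]}$-ergodic.

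For the limit stage $\alpha$, by the inductive hypothesis $E^{[\Gamma]}$ is generically $J_\beta^{[\Z]}$-ergodic for every $\beta<\alpha$, hence generically $\bigoplus_{\beta<\alpha}J_\beta^{[\Z]}$-ergodic. The same kernel analysis as above applies verbatim, so Theorem~\ref{thm: main technical gen-erg thm} (with $F=\bigoplus_{\beta<\alpha}J_\beta^{[\Z]}$) concludes that $E^{[\Gamma]}$ is generically $J_\alpha^{[\Z]}=\bigl(\bigoplus_{\beta<\alpha}J_\beta^{[\Z]}\bigr)^{[\Z]}$-ergodic.

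The reason the stronger inductive statement is phrased with $P$ universally quantified over infinite sets of primes is precisely to absorb the kernel computation: removing finitely many primes at each induction step never exits the class of groups being considered. The only delicate point is verifying that no finite summands appear in the kernel; this is cleaner than in case (1) because every nonzero element of $\bigoplus_{p\in P}\Z_p$ has squarefree order, so any $p$-component of $\Gamma$ either survives untouched in the kernel or is killed entirely. I do not anticipate a serious obstacle — once the kernel structure is pinned down, the induction runs on autopilot using Theorem~\ref{thm: main technical gen-erg thm} exactly as in Section~\ref{subsec: proof of iterated jump erg}.
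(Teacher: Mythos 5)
Your proof is correct and follows essentially the same route as the paper: transfinite induction on $\alpha$ with the hypothesis quantified over all infinite sets of primes, the observation that any homomorphism to a quotient of a subgroup of $\Z$ is either trivial or lands in a finite cyclic group with kernel $\bigoplus_{p\in P'}\Z_p$ for a cofinite $P'\subset P$, and then an application of Theorem~\ref{thm: main technical gen-erg thm}. Your kernel computation is in fact spelled out in more detail than in the paper.
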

Assume the claim holds for $\alpha$, and fix some infinite set of primes $P$. Let $\phi$ be a group homomorphism from $\bigoplus_{p\in P}\Z_p$ to a quotient of $\Z$. We show that the assumptions in Theorem~\ref{thm: main technical gen-erg thm} hold. Then the theorem implies that $E^{[\bigoplus_{p\in P}\Z_p]}$ is generically $(J_\alpha^{[\Delta]})^{[\Delta]}=J_{\alpha+1}^{[\Delta]}$-ergodic, as required.
If $\phi$ is trivial, we are done. If $\phi$ is non-trivial, then it must be into a non-trivial quotient of $\Z$, which is finite. Furthermore, its kernel is of the form $K=\bigoplus_{p\in P'}\Z_p$ for an infinite set $P'\subset P$. By the inductive assumption, $E^{[K]}$ is generically $J_\alpha^{[\Delta]}$-ergodic, as required.

\section{Preliminaries}

In this section we cover some background and some lemmas towards the proof of Theorem~\ref{thm: main technical gen-erg thm}.

Familiarity with the basics of forcing, as can be found in \cite{Halbeisen-book-2017}, \cite{Jech2003}, or \cite{Kunen2011}, will be assumed for the remainder of this paper. We denote by $V$ the ground model over which we force. Given a forcing poset $\P$, a $\P$-name $\tau$, and a generic filter $G\subset\P$, $\tau[G]$ denotes the interpretation of $\tau$ according to $G$. Often the generic filter $G$ is identified with some set $x$, (for example, a generic real) in which case we may write $\tau[x]$ instead of $\tau[G]$.

Given two posets $\P,\Q$, and filters $G\subset\P$, $H\subset \Q$, say that they are \textbf{mutually generic} if $G\times H\subset \P\times\Q$ is a generic filter. We will use often and implicitly the mutual genericity lemma, that if $G\subset \P$ is generic over $V$ and $H\subset\Q$ is generic over $V[G]$ then $G$ and $H$ are mutually generic.

\subsection{Classifying invariants}\label{section: prelims - classifying invariants}
Recall that $E$ is {\bf classifiable by countable structures} if $E$ is Borel reducible to an isomorphism relation on countable structures.
Equivalently, $E$ is Borel reducible to an orbit equivalence relation induced by a closed subgroup of $S_\infty$. The reader is referred to \cite[12.3]{Kano08}, \cite{Gao09}, \cite{Kechris_1997-classification-problems}, or \cite{Hjo00}, for background on classification by countable structures. 
For example, the iterated jumps $J_\alpha^{[\Gamma]}$ of Clemens and Coskey are all classifiable by countable structures (see \cite{Clemens-Coskey-jumps}).

Let $E$ be an analytic equivalence relation on a Polish space $X$. 
A {\bf complete classification of $E$} is an assignment $x\mapsto A_x$ such that for any $x,y\in X$,
\begin{equation*}
x\mathrel{E}y \iff A_x=A_y.
\end{equation*}
The sets $A_x$ are called complete invariants for $E$, or \textbf{classifying invariants} for $E$.

The key property of equivalence relations which are classifiable by countable structures which will be used in this paper is that they admit a complete classifications which is absolute in the following way (see \cite[Fact 2.5]{Sha18}).
Say that the map $x\mapsto A_x$ is an \textbf{absolute classification} if it is definable by some set-theoretic formula in an absolute way. That is, for any model $V$ (containing the parameters)
\begin{itemize}
    \item this formula defines a complete classification of $E$, and
    \item for $x$ in $V$, $A_x$ is computed the same in $V$ and any generic extension of $V$. 
\end{itemize}

Any equivalence relation which is classifiable by countable structures admits an absolute classification via the Scott analysis, see \cite[Lemma 2.4]{Friedman2000}, \cite[Chapter 12.1]{Gao09}, or \cite[Section 3.1]{Ulrich-Rast-Laskowski-2017}.
Let us mention a few examples of absolute classifications which will be used below.

\begin{ex}
Let $E$ be a countable Borel equivalence relation on a Polish space $X$.
\begin{itemize}
    \item The trivial classification, $x\mapsto [x]_E$, is an absolute classification. Since each $E$-class $[x]_E$ is countable, it does not change after forcing.
    \item The equivalence relation $E^\omega$ on $X^\omega$ is no longer countable, and the naive classification $x\mapsto [x]_{E^\omega}$ fails to be absolute.  $E^\omega$ is classifiable by countable structures, and admits the absolute complete classification $x\mapsto\seqq{[x(n)]_E}{n<\omega}$. The classifying invariants are countable sequences of countable subsets of $X$.

\end{itemize}
\end{ex}

Next we consider classifying invariants for the $\Gamma$-jumps, which will be crucial in our study of the $\Gamma$-jumps in this paper.
\begin{ex}\label{example: abs class of jumps}
Fix a countable group $\Delta$, and an analytic equivalence relation $F$.
Assume that $F$ is classified by the absolute classification $y\mapsto B_y$. The product equivalence relation $F^\Delta$ admits the absolute complete classification $\seqq{x(\zeta)}{\zeta\in\Delta}\mapsto\seqq{B_{x(\zeta)}}{\zeta\in\Delta}$, where the invariants are $\Delta$-sequences of $F$-classifying invariants.
The group $\Delta$ acts on such $\Delta$-sequences of invariants by $\delta\cdot\seqq{B_\zeta}{\zeta\in\Delta}=\seqq{B_{\delta^{-1}\zeta}}{\zeta\in\Delta}$. 
The $\Delta$-jump $F^{[\Delta]}$ is classified by the absolute complete classification
\begin{equation*}    \seqq{x(\zeta)}{\zeta\in\Delta}\mapsto \Delta\cdot \seqq{B_{x(\zeta)}}{\zeta\in\Delta}=\set{\delta\cdot\seqq{B_{x(\zeta)}}{\zeta\in\Delta}}{\delta\in\Delta}. \end{equation*}

A key example is the following. If $E$ is a countable Borel equivalence relation on $X$, then $E^{[\Delta]}$ on $X^\Delta$ is classified by the absolute map 
\begin{equation*}    \seqq{x(\zeta)}{\zeta\in\Delta}\mapsto \Delta\cdot \seqq{[x(\zeta)]_{E}}{\zeta\in\Delta}. \end{equation*}

\end{ex}

\subsection{Pins for $\Gamma$-jumps}
Recall the definition of pins for equivalence relations. The following definitions and terminology are from {\cite[Section~2.1]{Larson_Zapletal_2020}} (see also \cite{Kano08}). 
\begin{defn}\label{def;pinned}
Let $E$ be an analytic equivalence relation on a Polish space $X$. Let $\mathbb{P}$ be a poset and $\tau$ a $\mathbb{P}$-name forced to be in $X$. 
\begin{itemize}
    \item The name $\tau$ is $E$-pinned if $\mathbb{P}\times \mathbb{P}$ forces that $\tau_{l}$ is $E$-equivalent to $\tau_{r}$, where $\tau_{l}$ and $\tau_{r}$ are the interpretation of $\tau$ using the left and right generics respectively.
    \item If $\tau$ is $E$-pinned the pair $\left<\mathbb{P},\tau\right>$ is called an $\boldsymbol{E}$-\textbf{pin}.
    \item An $E$-pin $\left<\mathbb{P},\tau\right>$ is \textbf{trivial} if there is some $x\in X$ such that $\mathbb{P}\Vdash \tau\mathrel{E}\check{x}$.
    \item Given two $E$-pins $\left<\mathbb{P},\sigma\right>$ and $\left<\mathbb{Q},\tau\right>$, say that they are $E$-equivalent\footnote{The extension of $E$ to pins is denoted in \cite{Larson_Zapletal_2020} as $\bar{E}$. Here we use $E$, due to laziness.}, $\left<\mathbb{P},\sigma\right>\mathrel{E}\left<\mathbb{Q},\tau\right>$, if $\mathbb{P}\times \mathbb{Q} \Vdash \sigma \mathrel{E}\tau$.
\end{itemize}
\end{defn}

An $E$-pin $\left<\P,\tau\right>$ describes an $E$-class in a further generic extension by $\P$. The pin is trivial if this is simply the $E$-class of some element in the ground model. The reader is referred to \cite{Larson_Zapletal_2020} and \cite{Kano08} for more about pins. One useful fact is that, for an $E$-pin $\left<\P,\tau\right>$, for any two generic filter $G,H\subset\P$ (not necessarily mutually generic), $\tau[G]$ and $\tau[H]$ are $E$-related in any model containing both. (See \cite[Proposition 2.1.2]{Larson_Zapletal_2020}.)

Let $E$ be an analytic equivalence relation. Larson and Zapletal proved \cite[Theorem 2.6.2]{Larson_Zapletal_2020} that if $\P$ is a \textbf{reasonable} poset then any $E$-pin $\left<\P,\tau\right>$ is trivial. See \cite[Definition~2.6.1]{Larson_Zapletal_2020} for the definition of reasonable, due to Foreman and Magidor. Any proper poset, and so any c.c.c. poset, is reasonable. For the applications in this paper, we will only need the fact that there are no non-trivial pins $\left<\P,\tau\right>$ when $\P$ is the Cohen-real poset.
See \cite[Section 2.6]{Larson_Zapletal_2020} for a thorough discussion, and open questions, about which posets may carry a non-trivial pin.

\begin{remark}
Suppose $\left<\P,\tau\right>$ is an $E$-pin and $f\colon E\to_B F$ is a Borel homomorphism from $E$ to $F$. Let $\sigma$ be a $\P$-name which is forced to be $f(\tau)$. Then $\left<\P,\sigma\right>$ is an $F$-pin.
More generally, $f$ extends to a map (defined in an absolute way) from $E$-pins to $F$-pins, sending $E$-equivalent pins to $F$-equivalent pins.
\end{remark}

Let $F$ be an analytic equivalence relation on a Polish space $Y$ and $\Delta$ a countable group. 
Larson and Zapletal \cite[Example 2.3.12]{Larson_Zapletal_2020} classified the $F^{[\Delta]}$-pins, in terms of $F$-pins.
We will use the following lemma, which is an immediate consequence of \cite[Theorem 2.3.8]{Larson_Zapletal_2020}.
\begin{lemma}\label{lemma: pins from jump and product}
Suppose $(\Q,\sigma)$ is an $F^{[\Delta]}$-pin. Then there is some $q\in\Q$ such that $(\Q\restriction q,\sigma)$ is an $F^\Delta$-pin.
\end{lemma}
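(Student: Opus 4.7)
My plan is a three-copy (``mutual genericity'') argument that trades on the countability of $\Delta$ to extract a single witness $\delta_0\in\Delta$ from the $F^{[\Delta]}$-pin hypothesis.

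First, since $\Q\times\Q\force(\exists\delta\in\Delta)\,\delta\cdot\sigma_l\mathrel{F^\Delta}\sigma_r$ and $\Delta$ is countable, a straightforward density argument produces a condition $(q_l,q_r)\in\Q\times\Q$ and a specific $\delta_0\in\Delta$ such that
$$(q_l,q_r)\force \delta_0\cdot\sigma_l\mathrel{F^\Delta}\sigma_r.$$
I claim that $q:=q_l$ witnesses the conclusion, i.e., that $(\Q\restriction q_l,\sigma)$ is already an $F^\Delta$-pin.

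To verify this, I would take three mutually generic $\Q$-filters $G_1,G_2,G_3$ over $V$, arranged so that $q_l\in G_1\cap G_3$ and $q_r\in G_2$. By the mutual-genericity lemma any pair among them is itself $\Q\times\Q$-generic, so both $(G_1,G_2)$ and $(G_3,G_2)$ meet the condition $(q_l,q_r)$. Applying the forced statement to each pair yields, in $V[G_1,G_2,G_3]$,
$$\delta_0\cdot\sigma_1\mathrel{F^\Delta}\sigma_2\mathrel{F^\Delta}\delta_0\cdot\sigma_3.$$
Since $F^\Delta$ is defined coordinate-wise, the shift action of $\Delta$ preserves $F^\Delta$; cancelling $\delta_0$ leaves $\sigma_1\mathrel{F^\Delta}\sigma_3$.

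Finally, a countable intersection of analytic sets is analytic, so $F^\Delta$ is analytic and $\sigma_1\mathrel{F^\Delta}\sigma_3$ is a $\Sigma^1_1$ statement about reals in $V[G_1,G_3]$, hence already true there by Shoenfield absoluteness. Since $G_1$ and $G_3$ were arbitrary mutually generic filters for $\Q\restriction q_l$ over $V$, this gives $(\Q\restriction q_l)\times(\Q\restriction q_l)\force\sigma_l\mathrel{F^\Delta}\sigma_r$, as required. The only mildly subtle point is the very first step — extracting the single $\delta_0$ from the pin via countability of $\Delta$; after that, the three-copy trick together with $\Delta$-invariance of $F^\Delta$ and Shoenfield absoluteness do the rest, and no direct appeal to the finer pin-classification of \cite[Theorem~2.3.8]{Larson_Zapletal_2020} is needed.
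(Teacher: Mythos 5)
Your proof is correct and is essentially the paper's argument: both extract a single $\delta_0$ and a condition forcing $\delta_0\cdot\sigma_l\mathrel{F^\Delta}\sigma_r$, then run a three-generic transitivity argument. The only cosmetic difference is that you pivot on the right coordinate (so you need the extra, trivially true, observation that the shift action preserves $F^\Delta$ in order to cancel $\delta_0$), whereas the paper pivots on the left coordinate and gets $\sigma[G_1]\mathrel{F^\Delta}\sigma[G_2]$ directly by transitivity.
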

\begin{proof}
By assumption, it is forced by $\Q\times\Q$ that $\sigma_l \mathrel{F^{[\Delta]}}\sigma_r$, so there is $\delta\in\Delta$ such that $\delta\cdot\sigma_l \mathrel{F^\Delta} \sigma_r$.
There is $\delta\in\Delta$ and $(p,q)\in\Q\times\Q$ forcing that $\delta\cdot\sigma_l \mathrel{F^\Delta} \sigma_r$. The condition $q$ satisfies the conclusion of the lemma. Indeed, given two generics $G_1,G_2$ extending $q$, take $G_3$ generic over them, extending $p$. We conclude that $\delta\cdot\sigma[G_3] \mathrel{F^\Delta} \sigma[G_1]$ and $\delta\cdot\sigma[G_3] \mathrel{F^\Delta} \sigma[G_2]$, and therefore $\sigma[G_1] \mathrel{F^\Delta}\sigma[G_2]$.
\end{proof}


\subsection{Symmetric models and Borel homomorphisms}\label{subsec: symm models and homomorphisms}

We will study Borel homomorphism by studying definable pins in symmetric models, according to Lemma~\ref{lem;symm-model-ergod}. On the one hand, this is a generalization of ideas in \cite[Section 2.8]{Larson_Zapletal_2020}, where homomorphisms from $=^+$ (to some analytic equivalence relation) were analyzed by studying pins in the Solovay model.

For different equivalence relations $E$, a particularly chosen symmetric model is constructed, following \cite{Sha18,shani2019-strong-ergodicity}, in which homomorphisms from $E$ correspond to definable pins in this model. This is also reminiscent of the arguments in \cite[Section~6.1.3]{Kanovei-Sabok-Zapletal-2013}, where homomorphisms from $=^+$ were analyzed using (implicitly) the symmetric Cohen model.

The relevant models will be of the form $V(A)$, the minimal transitive ZF extension of $V$ in which $A$ is a member, where $A$ is a set in some generic extension of $V$. Typically, $A$ will be a classifying invariant of some generic real. Sets in $V(A)$ are definable, in $V(A)$, using parameters from $V$ and the transitive closure of $A$.
\begin{fact}
For any set $X\in V(A)$, there is some formula $\psi$, a parameter $v\in V$ and finitely many parameters $\bar{a}$ from the transitive closure of $A$, so that $X$ is defined, in $V(A)$, as the unique set satisfying $\psi(X,A,\bar{a},v)$.
In this case, say that $X$ is \textbf{definable from $A$ and $\bar{a}$ over $V$}.
\end{fact}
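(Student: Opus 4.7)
The plan is to realize $V(A)$ concretely as a cumulative hierarchy $V(A)=\bigcup_{\alpha\in\mathrm{Ord}} M_\alpha$, where $M_0=\mathrm{TC}(\{A\})$, $M_{\alpha+1}=\mathrm{Def}(M_\alpha\cup V_\alpha)$ is the set of first-order definable subsets of $M_\alpha\cup V_\alpha$, and $M_\lambda=\bigcup_{\alpha<\lambda}M_\alpha$ for limit $\lambda$. A standard verification shows that this yields a transitive class model of $\mathrm{ZF}$, that it contains $V$ as a subclass and $A$ as an element, and that it is minimal such; hence it equals $V(A)$. With this description in place, the definability claim will be read off from the construction itself.

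Given $X\in V(A)$, let $\alpha$ be least with $X\in M_{\alpha+1}$, so $X$ is defined over $M_\alpha\cup V_\alpha$ by some first-order formula $\varphi(x,\bar p)$ with finitely many parameters $\bar p\in M_\alpha\cup V_\alpha$. Split $\bar p$ into a $V_\alpha$-part (already in $V$) and an $M_\alpha$-part. The $M_\alpha$-parameters have strictly lower rank in the $M_\bullet$-hierarchy, so by induction on $\alpha$ each of them is definable in $V(A)$ from $A$, finitely many members of $\mathrm{TC}(A)$, and finitely many members of $V$. The ordinal $\alpha$ itself, which is needed to name the level $M_\alpha$ inside $V(A)$, is also in $V$. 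Substituting these inductive definitions into $\varphi$ produces a single formula $\psi$ that defines $X$ in $V(A)$ from $A$, finitely many elements of $\mathrm{TC}(A)$, and finitely many elements of $V$.

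Finally, the finitely many $V$-parameters can be collapsed into a single $v\in V$: take $v$ to be the tuple of all of them (a finite tuple of $V$-elements is itself in $V$) and adjust $\psi$ to access its components by projection. Uniqueness of $X$ as the witness to $\psi(-,A,\bar a,v)$ inside $V(A)$ is automatic, because $\psi$ explicitly reconstructs the relevant levels of the $M_\bullet$-hierarchy from $A$ and $v$ and then pinpoints $X$ via the original $\varphi$. The main point that requires care is checking that the hierarchy $M_\bullet$ can be defined absolutely inside $V(A)$ using only $A$ and parameters from $V$, so that the induction on $\alpha$ produces a formula $\psi$ whose evaluation in $V(A)$ actually agrees with the external construction; once that is confirmed, the remaining parameter bookkeeping is routine.
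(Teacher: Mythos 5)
Your proof is correct and follows the standard construction of $V(A)$ as a definability hierarchy over $\mathrm{TC}(\{A\})$ together with the $V_\alpha$'s; the paper does not prove this Fact itself but defers to \cite[Section~2]{shani2019-strong-ergodicity}, where essentially the same argument (induction on the levels of such a hierarchy) is carried out. The one point you rightly flag --- being able to define the levels $M_\beta$ for $\beta\le\alpha$ \emph{inside} $V(A)$ --- is handled by folding the set $V_{\alpha+1}^V$ (equivalently, the sequence $\seqq{V_\beta^V}{\beta\le\alpha}$) into the single ground-model parameter $v$, so that the recursion defining the hierarchy up to stage $\alpha$ is absolute and can be performed internally in $V(A)$ from $A$ and $v$ alone.
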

(See \cite[Section~2]{shani2019-strong-ergodicity}.) The possible defining parameters of some set in $V(A)$ will often be the key question. Of particular interest are the sets definable without parameters, that is, definable from $A$ alone over $V$.

Let $X$ be a Polish space and $I$ a $\sigma$-ideal on the Borel subsets of $X$. A Borel set in $X$ is \textbf{$I$-positive} if it is not in $I$, and is \textbf{$I$-large} if its complement is in $I$. Let $\P_I$ be the poset of $I$-positive Borel subsets of $X$, ordered by inclusion. See \cite{Zapletal-2008-forcing-idealized} for the theory of forcing with $\P_I$. There is a canonical $\P_I$-name $\dot{x}$ for a member of $X$ added by a generic filter (see \cite[2.1.2]{Zapletal-2008-forcing-idealized}). We will say that $x\in X$ is $\P_I$-generic (over $V$) if $x=\dot{x}[G]$ for some generic filter $G\subset\P_I$ (over $V$).
Of particular interest are the ideals $I$ so that $\P_I$ is a proper poset (see \cite[Section~2.2]{Zapletal-2008-forcing-idealized}).

For the applications in this paper, $I$ will always be the ideal of meager sets, in which case the forcing $\P_I$ is isomorphic to Cohen-real forcing (and in particular is proper).

The following is a generalization of \cite[Lemma 2.5]{shani2019-strong-ergodicity}.
\begin{lem}\label{lem;symm-model-ergod}
Suppose $E$ and $F$ are analytic equivalence relations on Polish spaces $X$ and $Y$ respectively and $x\mapsto A_x$ is an absolute classification of $E$.
Let $I$ be a $\sigma$-ideal as above so that $\P_I$ is proper. Fix $x\in X$ a $\P_I$-generic over $V$, and let $A=A_x$.
Suppose $(\P,\tau)\in V(A)$ is definable from $A$ (alone) over $V$, $\P$ is a poset and $\tau$ is a $\P$ name for an element in $X$, such that in $V(A)$, $\P\force A_\tau=A$, and in $V[x]$, $\P$ is a reasonable poset.
The following are equivalent.
\begin{enumerate}
    \item For every partial Borel homomorphism $f\colon E\to_B F$, if $x$ is in the domain of $f$, then $f$ maps an $I$-positive set, containing $x$, into a single $F$-class;
    \item if $\sigma\in V(A)$ is definable from $A$ over $V$ and $(\P,\sigma)$ is an $F$-pin, then there is $y_0\in Y\cap V$ so that $\P\force \sigma\mathrel{F} \check{y}_0$.
\end{enumerate}
\end{lem}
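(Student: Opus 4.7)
The plan is to establish both directions by translating between Borel homomorphisms $f\colon E\to_B F$ at the generic point $x$ and $F$-pins on $\P$ inside the symmetric submodel $V(A)$. The common toolkit is that reasonableness of $\P$ in $V[x]$, together with \cite[Theorem~2.6.2]{Larson_Zapletal_2020}, forces $E$- and $F$-pins on $\P$ to be trivial over $V[x]$; the content of the lemma will be descending these witnesses from $V[x]$ down to $V$.

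For $(2)\Rightarrow(1)$, given a partial Borel homomorphism $f$ with $x\in\dom(f)$, I will form $\sigma:=f(\tau)$, a $\P$-name in $V(A)$ definable from $A$ over $V$ (with the Borel code of $f$ as a $V$-parameter). Since $\P\force A_\tau=A$, any two mutually generic interpretations of $\tau$ are $E$-equivalent, so the homomorphism property will show that $(\P,\sigma)$ is an $F$-pin. Applying (2) produces $y_0\in V\cap Y$ with $\P\force\sigma\mathrel{F}\check{y}_0$. Reasonableness of $\P$ in $V[x]$ then trivializes the $E$-pin $(\P,\tau)$ over $V[x]$, giving $x''\in V[x]\cap X$ with $\P\force_{V[x]}\tau\mathrel{E}\check{x}''$; absoluteness of the classification yields $A_{x''}=A_x$, so $x''\mathrel{E}x$. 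For any $\P$-generic $H$ over $V[x]$ I then have
\[
f(x)\mathrel{F}f(x'')\mathrel{F}f(\tau[H])=\sigma[H]\mathrel{F}y_0.
\]
Thus $x$ lies in the analytic set $f^{-1}([y_0]_F)$; since $\P_I$-generics form a comeager set and analytic sets have the Baire property, $f^{-1}([y_0]_F)$ is comeager modulo meager and hence contains an $I$-positive Borel set around $x$.

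For $(1)\Rightarrow(2)$, I will start with a definable $F$-pin $(\P,\sigma)\in V(A)$. For each $\P_I$-generic $x'$, reasonableness of $\P$ in $V[x']$ and \cite[Theorem~2.6.2]{Larson_Zapletal_2020} produce $y_{x'}\in V[x']\cap Y$ with $\P\force_{V[x']}\sigma\mathrel{F}\check{y}_{x'}$. A canonical choice (say, first in a fixed Borel enumeration of $\P_I$-names for reals) packages these into a single $\P_I$-name $\dot y$ in $V$, and hence a Borel partial function $f(x'):=\dot y[x']$ on a comeager Borel subdomain. For generic $x_1\mathrel{E}x_2$ one has $A_{x_1}=A_{x_2}$, so $(\P,\sigma)$ is the same pin, and a $\P$-generic $H$ over $V[x_1,x_2]$ yields $y_{x_1}\mathrel{F}\sigma[H]\mathrel{F}y_{x_2}$; hence $f$ is an $E$-to-$F$ homomorphism. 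Applying (1) produces $y_1\in Y$ and an $I$-positive Borel $B\in V$ with $f(B)\subset[y_1]_F$. Then $(\P_I\restriction B,\dot y)$ is itself an $F$-pin in $V$: any two mutually generic $x_1,x_2\in B$ satisfy $f(x_i)\mathrel{F}y_1$, so $\dot y[x_1]\mathrel{F}\dot y[x_2]$. Because $\P_I$ is Cohen-real forcing it is c.c.c., hence reasonable; a second application of Larson--Zapletal supplies $y_0\in V\cap Y$ with $\P_I\restriction B\force\dot y\mathrel{F}\check{y}_0$. Substituting $x\in B$ (which I may assume) yields $\P\force_{V[x]}\sigma\mathrel{F}\check{y}_0$, and absoluteness of the forcing relation for the $\Pi^1_1$ statement $\neg(\sigma\mathrel{F}\check{y}_0)$ between $V(A)\subset V[x]$ will transfer this to $\P\force_{V(A)}\sigma\mathrel{F}\check{y}_0$.

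The hard part will be $(1)\Rightarrow(2)$, in two places. First, producing a Borel and genuinely homomorphic $f$ from the abstract pin requires a careful canonical choice of $y_{x'}$ varying Borel-measurably in $x'$; this is exactly where the hypothesis that $\P$ is reasonable in $V[x]$ (not merely in $V(A)$) will be essential. Second, descending the witness $y_0$ from $V[x]$ down to $V$ relies on the outer forcing $\P_I$ itself being reasonable --- a second, meta-level use of the Larson--Zapletal pinning theorem, which is the structural heart of the descent argument and the reason the lemma will cover all analytic $F$ rather than only those classifiable by countable structures.
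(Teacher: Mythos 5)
Your overall architecture coincides with the paper's: in $(2)\Rightarrow(1)$ you push $f$ through $\tau$ to obtain a definable $F$-pin $(\P,f(\tau))$ and apply (2); in $(1)\Rightarrow(2)$ you trivialize $(\P,\sigma)$ over $V[x]$ using reasonableness, convert the witness into a $\P_I$-name, and read off a partial Borel homomorphism, whose homomorphism property you verify exactly as the paper does (same invariant, same pin, common further generic). The one genuine variation is at the end of $(1)\Rightarrow(2)$: to land $y_0$ in $V$ you apply the Larson--Zapletal pinning theorem a second time, to $(\P_I\restriction B,\dot y)$. This works, but justify it by properness of $\P_I$ (the actual hypothesis), not by ``$\P_I$ is Cohen forcing''; and it is heavier than necessary --- since the $I$-positive Borel set $q$ and the code of $f$ both lie in $V$, evaluating $f$ at any point of $q\cap V$ already produces a representative in $V$ of the single $F$-class.

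The direction $(2)\Rightarrow(1)$ has two gaps. First, the detour through $x''$ is both unnecessary and unjustified: you write $f(x)\mathrel{F}f(x'')$, but $f$ is only a partial homomorphism and nothing guarantees $x''\in\dom f$. Delete $x''$: the hypothesis $\P\force A_\tau=A$ together with $A=A_x$ and absoluteness of the classification gives $x\mathrel{E}\tau[H]$ directly, hence $f(x)\mathrel{F}\sigma[H]\mathrel{F}y_0$. Second, and more seriously, the concluding step --- ``$x$ lies in the analytic set $f^{-1}([y_0]_F)$, hence it is comeager modulo meager and contains an $I$-positive Borel set around $x$'' --- is not a valid inference as written (membership of a single point in an analytic set says nothing about its category), and it silently specializes to the meager ideal, whereas the lemma is stated for an arbitrary proper $\P_I$. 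The correct, ideal-independent move is the forcing one: fix a condition $p\in\P_I$ forcing that $\P\force\sigma\mathrel{F}\check{y}_0$ holds in $V(\dot{A})$, and let $C$ be the set of points extending $p$ that are $\P_I$-generic over a suitable countable model $M$; this set is $I$-positive by \cite[Proposition 2.2.2]{Zapletal-2008-forcing-idealized}, contains $x$, and the computation you ran at $x$ runs verbatim at every $z\in C$, so $f$ maps all of $C$ into $[y_0]_F$. (For the meager ideal your category argument can be repaired by using that $f^{-1}([y_0]_F)$ is coded in $V$ and $x$ is generic over $V$, but the condition-plus-countable-model argument is what the general statement requires.)
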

\begin{remark}\label{remark: symmetric-model-ergod}
\begin{enumerate}
    \item In our examples $\P$ will be equivalent to Cohen forcing in $V[x]$ (but not in $V(A)$). In fact, $\P$ will be the ``quotient forcing'', so that $V[x]$ is a $\P$-generic extension of $V(A)$.
    \item In clause (1) of the lemma, if $x$ is in the domain of $f$ then the domain of $f$ is an $I$-positive set.
    \item Assume further that $E$ is $I$-ergodic, that is, any $E$-invariant Borel subset of $X$ is either in $I$ or its complement is in $I$. Then the conclusion in clause (1) is equivalent to: ``$f$ maps a $I$-large set to a single $F$-class''.
    \item When $I$ is the ideal of meager sets, then $I$-ergodicity is ``generic-ergodicity''. In our applications, the equivalence relation $E$ will be generically ergodic,  so (1) is equivalent to $E$ being generically $F$-ergodic.
\end{enumerate}
\end{remark}
\begin{proof}[Proof of Lemma~\ref{lem;symm-model-ergod}]
Assume (2), and let $f$ be a partial Borel homomorphism as in (1). Let $\sigma$ be the $\P$ name for $f(\tau)$. Then $\sigma$ is definable from $A$ over $V$ (since $\P$ and $\tau$ are so definable, and $f$ is coded in $V$). Also $(\P,\sigma)$ is an $F$-pin. By (2), there is some $y_0\in Y$ in the ground model $V$ so that $\P\force \sigma\mathrel{F}\check{y}_0$.
Fix a condition $p\in\P_I$ forcing this. Let $M$ be a sufficiently large transitive countable model. 
Let $C$ be the set of $z\in X$ which are $\P_I$-generic over $M$ and extend $p$. Then $C$ is $I$-positive by \cite[Proposition 2.2.2]{Zapletal-2008-forcing-idealized}. To conclude (1), we show that for any $x\in C$, $f(x)\mathrel{F}y_0$.
Fix $x\in C$, and let $H$ be some $\P$-generic over $V[x]$.
Since $A_x=A=A_{\tau[H]}$,  $x\mathrel{E}\tau[H]$. Also $\sigma[H]\mathrel{F}y_0$ and $\sigma[H]=f(\tau[H])$. Since $f\colon E\to_B F$ is a homomorphism, $f(x)\mathrel{F}y_0$.

Assume now (1), and fix a $\sigma$ as in (2). 
Working in $V[x]$, $(\P,\sigma)$ is an $F$-pin as well. Since $\P$ is reasonable, the pin $(\P,\sigma)$ is trivial, by \cite[Theorem~2.6.2]{Larson_Zapletal_2020}, so there is some $y\in V[x]$ such that $V[x]\models\P\force \sigma\mathrel{F}\check{y}$.
Let $\mu$ be a $\P_I$-name for $y$, and fix a condition $p$ in $\P_I$ forcing the above.
Fix a sufficiently large transitive countable model $M$, and let $C$ be the set of $z\in X\cap p$ which are $P_I$-generic over $M$. $C$ is $I$-positive by \cite[Proposition 2.2.2]{Zapletal-2008-forcing-idealized}. 
For $z\in C$ define $f(z)=\mu[z]$, the interpretation of $\mu$ according to the generic $z$ in $M[z]$.
Then $f$ is a partial Borel function, and $x$ is in the domain of $f$. 

We claim that $f$ is a partial homomorphism from $E$ to $F$. To see this, assume $z_1,z_2$ are in $C$ and are $E$-related. Then the invariants $A_{z_1}$ and $A_{z_2}$ are equal, which we denote by $\bar{A}$. Let $\bar{\P},\bar{\sigma}$ be as defined from $\bar{A}$ over $M$ in $M(\bar{A})$ (according to some fixed definition of $\P,\sigma$ from $A$ over $V$). Fix a generic filter $H\subset\P$ over $M(\bar{A})$.
Now $f(z_1)=\mu[z_1]\mathrel{F} \sigma[H]\mathrel{F}\mu[z_2]=f(z_2)$, as required.

Finally, by (1), there is an $I$-positive set $q\subset C$ with $x\in q$ and $y_0\in Y$ such that $f(z)\mathrel{F}y_0$ for all $z\in q$. 
Since $x$ is in $q$, then $y=\mu[x]\mathrel{F}y_0$, therefore $\P\force \sigma\mathrel{F} y_0$. 
\end{proof}


\section{A symmetric model for the $\Gamma$-jump}\label{section: a symmetric model}

Fix a generically ergodic countable Borel equivalence relation $E$ on a Polish space $X$, and $\Gamma$ a countable infinite group.
Towards Theorem~\ref{thm: main technical gen-erg thm}, we describe in this section the main model $V(A)$ which will be used to study the $\Gamma$-jump $E^{[\Gamma]}$.

Given $x\in X^{\Gamma}$, for each $\gamma\in\Gamma$ let $A^x_\gamma=[x(\gamma)]_E$, the $E$-class of $x(\gamma)$, and $\vec{A}^x=\seqq{A^x_\gamma}{\gamma\in\Gamma}$.
For a $\Gamma$-indexed sequence $\vec{A}=\seqq{A_\alpha}{\alpha\in\Gamma}$, define $\gamma\cdot \vec{A}=\seqq{A_{\gamma^{-1}\alpha}}{\alpha\in\Gamma}$.
Define
\begin{equation*}
A^x=\Gamma\cdot\vec{A}^x=\set{\gamma\cdot\seqq{A_{\alpha}}{\alpha\in\Gamma}}{\gamma\in \Gamma}. 
\end{equation*}
Recall from Section~\ref{section: prelims - classifying invariants} that the map $x\mapsto A^x$ is an absolute complete classification of $E^{[\Gamma]}$, while $x\mapsto \vec{A}^x$ is an absolute complete classification of $E^\Gamma$.

Let $x\in X^\Gamma$ be a Cohen-generic real over $V$, with respect to the product topology. Let $\vec{A}=\vec{A}^x$ and $A=A^x=\Gamma\cdot\vec{A}$, its $E^\Gamma$ and $E^{[\Gamma]}$ classifying invariants, respectively.
Note that $V(A)=V(\vec{A})$. This is because $A\in V(\vec{A})$ and $\vec{A}\in A$, and so $\vec{A}\in V(A)$.
However, while $A$ is definable from $\vec{A}$, $\vec{A}$ is not definable from $A$. We will often use $V(\vec{A})$ when dealing with definability from $\vec{A}$, and use $V(A)$ when dealing with definability from $A$.

This model was used in \cite{shani2019-strong-ergodicity} to study the $\Gamma$-jump $E^{[\Gamma]}$ and the product equivalence relation $E^\omega$, by studying objects in this model, definable from $A$ or $\vec{A}$ respectively. 
This same approach is taken here, combined with Lemma~\ref{lem;symm-model-ergod}, and a finer analysis of definable sets in $V(\vec{A})$.
The following lemma is a refinement of \cite[Proposition 3.5]{shani2019-strong-ergodicity}. 
\begin{lemma}[Interpolation in $V(\vec{A})$]\label{lemma: interpolation}
Let $F_0,F_1$ be finite subsets of $\Gamma$.
Let $\phi_i$ be formulas and $v_i$ parameters in $V$, for $i=0,1$.
Assume that in $V(\vec{A})$, for any $z$,
\begin{equation*}
    \phi_0(z,\vec{A},\seqq{x(\gamma)}{\gamma\in F_0},v_0)\iff \phi_1(z,\vec{A},\seqq{x(\gamma)}{\gamma\in F_1},v_1).
\end{equation*}
Then there is a formula $\psi$ and a parameter $w\in V$ such that in $V(\vec{A})$
\begin{equation*}
    \phi_0(z,\vec{A},\seqq{x(\gamma)}{\gamma\in F_0},v_0)\iff\psi(z,\vec{A},\seqq{x(\gamma)}{\gamma\in F_0\cap F_1},w).
\end{equation*}
\end{lemma}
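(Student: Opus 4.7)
Let $F := F_0 \cap F_1$ and $G_i := F_i \setminus F$ for $i=0,1$; these are pairwise disjoint finite subsets of $\Gamma$. For brevity, write $x_S := \seqq{x(\gamma)}{\gamma\in S}$ for $S \subseteq \Gamma$. The strategy is to view $V[x]$ as a generic extension of $V(\vec{A})$ via the $V(\vec{A})$-definable poset $\P$ whose conditions are finite partial functions $p$ with $p(\gamma)\in\vec{A}(\gamma)$ for $\gamma\in\dom(p)\subseteq\Gamma$, ordered by reverse inclusion; the product structure of Cohen forcing on $X^\Gamma$ descends to a decomposition $\P=\P_F\times\P_{G_0}\times\P_{G_1}\times\P'$, where $\P'$ handles the coordinates outside $F_0\cup F_1$. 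In particular, over the intermediate model $V(\vec{A})[x_F]$, the restrictions $x_{G_0}$ and $x_{G_1}$ are mutually $(\P_{G_0}\times\P_{G_1})$-generic.

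Consider the set $Z:=\{z:\phi_0(z,\vec{A},x_{F_0},v_0)\}$. By definition $Z\in V(\vec{A})[x_{F_0}]$, and by the hypothesis $Z$ also equals $\{z:\phi_1(z,\vec{A},x_{F_1},v_1)\}$, so $Z\in V(\vec{A})[x_{F_1}]$ as well. Applying the product-forcing intermediate-model theorem over $V(\vec{A})[x_F]$ to the mutually generic extensions by $\P_{G_0}$ and $\P_{G_1}$, I conclude $Z\in V(\vec{A})[x_F]$. Hence $Z=\tau[x_F]$ for some $\P_F$-name $\tau\in V(\vec{A})$, and by inspection $\tau$ is definable in $V(\vec{A})$ from $\vec{A}$ together with the $V$-data $(\phi_0,v_0,F_0,F)$. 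Encoding this data as a single $V$-parameter $w$, the interpolating formula is
\[
\psi(z,\vec{A},x_F,w) \;\equiv\; (\exists p\in\P_F)\bigl(p\subseteq x_F \;\wedge\; p\force \check{z}\in\tau\bigr).
\]

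The main obstacle is applying the intermediate-model theorem inside the choiceless model $V(\vec{A})$: the posets $\P_{G_0},\P_{G_1}$ are finite products of the countable $E$-classes $\vec{A}(\gamma)$, which need not be well-orderable in $V(\vec{A})$, so the standard AC-based proof does not apply verbatim. I would address this either by a direct automorphism argument — using the homogeneity of product Cohen forcing on $X^\Gamma$ to produce swap automorphisms fixing $\vec{A}$ and $x_F$ pointwise but exchanging $x_{G_i}$ with a mutually generic copy inside $\prod_{\gamma\in G_i}\vec{A}(\gamma)$ — or by a transfer from the AC model $V[x]$, using the absoluteness of the classifying map $x\mapsto\vec{A}$ and the definability of $V(\vec{A})$ inside $V[x]$. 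The invariance of $Z$ under such swaps, combined with the two formulas providing complementary descriptions of $Z$, is what forces $Z$ to be definable from $\vec{A}$ and $x_F$ alone, yielding $\psi$.
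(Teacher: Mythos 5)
There is a genuine gap, and it sits exactly where you flagged it — but the problem is more basic than "the intermediate-model theorem needs AC." The issue is that $V(\vec{A})[x_F]$, $V(\vec{A})[x_{F_0}]$, and $V(\vec{A})[x_{F_1}]$ are all equal to $V(\vec{A})$: each $x(\gamma)$ is an element of the countable set $A_\gamma=[x(\gamma)]_E$, which is in the transitive model $V(\vec{A})$, so every $x(\gamma)$ is already a \emph{member} of $V(\vec{A})$. Consequently your posets $\P_F,\P_{G_0},\P_{G_1}$ are atomic over $V(\vec{A})$ (a total choice function on a finite set of coordinates is an atom lying in $V(\vec{A})$), "mutual genericity of $x_{G_0}$ and $x_{G_1}$ over $V(\vec{A})[x_F]$" is vacuous, and the conclusion "$Z\in V(\vec{A})[x_F]$" is trivially true and says nothing. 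The lemma is not about membership in intermediate models; it is about which \emph{parameters} suffice to define a fixed class $Z$ inside the single model $V(\vec{A})$. Reinterpreted as a definability statement, your "intersection of mutually generic extensions" claim \emph{is} the lemma, so invoking it is circular; the automorphism/transfer argument you defer to the last paragraph is the entire content, and as sketched it is not right either (replacing $x_{G_0}$ by another point of $\prod_{\gamma\in G_0}A_\gamma$ is not an automorphism of any poset over $V(\vec{A})$, and the replacement points are not "mutually generic copies" — they are just other elements of the same $E$-classes).

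What the paper does is carry out the definable analogue of the mutual-genericity intersection argument directly, and this is the step you are missing. Fix a Cohen condition $p$ (in $X^\Gamma$, over $V$) forcing the equivalence of the two definitions, and let $\psi(z,\vec{A},x_{F_0\cap F_1},w)$ say: there exist $y(\gamma)\in A_\gamma$ for $\gamma\in F_0$, compatible with $p$, agreeing with $x$ on $F_0\cap F_1$, such that $\phi_0(z,\vec{A},\seqq{y(\gamma)}{\gamma\in F_0},v_0)$ holds. (This is the definable counterpart of "$\exists p'\leq p$ with $p'\force\check z\in\tau$" in the ZFC intersection argument.) The forward implication is witnessed by $y=x\restriction F_0$. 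For the converse, extend any witness $y$ by $x$ off $F_0$; the key facts are that the resulting sequence is again Cohen-generic over $V$, extends $p$, and interprets $\dot{\vec{A}}$ identically — so the forced equivalence applies to it, and since it agrees with $x$ on all of $F_1$, one lands back at $\phi_1(z,\vec{A},x_{F_1},v_1)$ and hence at $\phi_0(z,\vec{A},x_{F_0},v_0)$. If you want to keep your framing, this change-of-generic computation is precisely the "transfer from $V[x]$" you would need to spell out; without it the proposal does not close.
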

\begin{proof}
Fix a condition $p$ forcing that $\phi_i^{V(\dot{\vec{A}})}(z,\dot{\vec{A}},\seqq{\dot{x}(\gamma)}{\gamma\in F_i},\check{v}_i)$ are equivalent for $i=0,1$.
Let $w\in V$ code $v_0$ and the condition $p$, and define $\psi(z,\vec{A},\seqq{x(\gamma)}{\gamma\in F_0\cap F_1},w)$ as the statement:
\begin{center}
``there are $y(\gamma)\in A_\gamma$, for $\gamma\in F_0$, such that $y(\gamma)$ agrees with $p(\gamma)$ for $\gamma\in F_0$, $y(\gamma)=x(\gamma)$ for $\gamma\in F_0\cap F_1$, and $\phi_0(z,\vec{A},\seqq{y(\gamma)}{\gamma\in F_0},v_0)$ holds''.    
\end{center}
We show that $\psi$ satisfies the conclusion of the lemma.
Note that, for any $z$, in $V(\vec{A})$
\begin{equation*}
    \phi_0(z,\vec{A},\seqq{x(\gamma)}{\gamma\in F_0},v_0)\implies \psi(z,\vec{A},\seqq{x(\gamma)}{\gamma\in F_0\cap F_1},w),
\end{equation*}
as witnessed by $y(\gamma)=x(\gamma)$ for $\gamma\in F_0$. We now prove the converse.
Assume that $\psi(z,\vec{A},\seqq{x(\gamma)}{\gamma\in F_1\cap F_2},w)$ holds in $V(\vec{A})$.
Fix $y(\gamma)$, $\gamma\in F_0$ such that $y(\gamma)$ agrees with $p(\gamma)$, $y(\gamma)=x(\gamma)$ for $\gamma\in F_0\cap F_1$, and $\phi_0^{V(\vec{A})}(z,\vec{A},\seqq{y(\gamma)}{\gamma\in F_0},v_0)$.

For $\gamma\in \Gamma\setminus F_0$, let $y(\gamma)=x(\gamma)$. Note that $\seqq{y(\gamma)}{\gamma\in\Gamma}\in X^\Gamma$ is Cohen-generic over $V$ and extends the condition $p$.
Note that $\dot{\vec{A}}$ is the same set $\vec{A}$ when interpreted by the generic $\seqq{x(\gamma)}{\gamma\in\Gamma}$ or the generic $\seqq{y(\gamma)}{\gamma\in\Gamma}$. 
Furthermore, $\dot{x}(\gamma)$ interpreted by $\seqq{y(\gamma)}{\gamma\in\Gamma}$ is $y(\gamma)$. 

Working in $V[\seqq{y(\gamma)}{\gamma\in \Gamma}]$, we conclude that $\phi_0^{V(\vec{A})}(z,\vec{A},\seqq{y(\gamma)}{\gamma\in F_0},v_0)$ holds, and that
\begin{equation*}
    \phi_0^{V(\vec{A})}(z,\vec{A},\seqq{y(\gamma)}{\gamma\in F_0},v_0)\iff \phi_1^{V(\vec{A})}(z,\vec{A},\seqq{y(\gamma)}{\gamma\in F_1},v_1) 
\end{equation*}
As $x(\gamma)=y(\gamma)$ for $\gamma\in F_1$, the right hand side is $\phi_1^{V(\vec{A})}(z,\vec{A},\seqq{x(\gamma)}{\gamma\in F_1},v_1)$, which is equivalent to $\phi_0^{V(\vec{A})}(z,\vec{A},\seqq{x(\gamma)}{\gamma\in F_0},v_0)$, as required.
\end{proof}
\begin{remark}
Each $A_\gamma$ is the $E$-class of $x(\gamma)$. Since $E$ is a countable Borel equivalence relation (whose code is in $V$), then for any two elements $z,u\in A_\gamma$ are definable from one another (using a parameter from $V$).
So in the lemma above, if we replace $\seqq{x(\gamma)}{\gamma\in F_i}$ with any sequence $\bar{x}_i\in\prod_{\gamma\in\Gamma_i}A_\gamma$, we reach the same conclusion.
\end{remark}

The following lemma, which is Lemma~5.1 in \cite{shani2019-strong-ergodicity}, captures the additional symmetries of $E^{[\Gamma]}$. We sketch a proof for convenience.
\begin{lemma}[Indiscernibility in $V(A)$]\label{lemma: indiscernibility V(A)}
In $V(A)$, the members of $A$ are indiscernible over $A$ and $V$.
\end{lemma}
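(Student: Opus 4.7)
The plan is to exhibit, for any two members $\vec{B}_1,\vec{B}_2$ of $A$, an automorphism of $V(A)$ fixing $V$ pointwise and $A$ setwise that sends $\vec{B}_1$ to $\vec{B}_2$. The source of this automorphism will be the shift-symmetry of the Cohen forcing $\P$ adding the generic $x\in X^\Gamma$; the desired indiscernibility then follows by applying the automorphism to the satisfaction relation in $V(A)$.

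Concretely, since $A=\Gamma\cdot\vec{A}$, write $\vec{B}_i=\gamma_i\cdot\vec{A}$ and set $\eta=\gamma_2\gamma_1^{-1}$, so that $\eta\cdot\vec{B}_1=\vec{B}_2$. The left-shift by $\eta$ is an automorphism $\sigma_\eta$ of $\P$, which lifts to an automorphism of $\P$-names fixing $V$ pointwise. Since $A=\Gamma\cdot\vec{A}$ is by definition $\Gamma$-invariant, $\sigma_\eta$ fixes the canonical name $\dot{A}$ up to forcing equivalence, while taking the name for $\vec{B}_1=\gamma_1\cdot\vec{A}$ to one forced equal to $\vec{B}_2$. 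Because $\sigma_\eta$ fixes $V$ and $\dot{A}$, it descends to an automorphism of the symmetric submodel $V(A)$: every set in $V(A)$ is hereditarily definable from $A$ and parameters in $V$, both of which $\sigma_\eta$ preserves. The descended automorphism sends $\vec{B}_1$ to $\vec{B}_2$, so for any formula $\phi$ and $v\in V$ we obtain $V(A)\models\phi(\vec{B}_1,A,v)\iff V(A)\models\phi(\vec{B}_2,A,v)$. The same diagonal-shift argument handles $n$-tuples of distinct members of $A$ lying in a common $\Gamma$-orbit under the diagonal action.

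The main obstacle I anticipate is verifying carefully that $\sigma_\eta$ genuinely descends to an automorphism of $V(A)$ rather than merely of $V[x]$; this uses the standard symmetric-model bookkeeping, where a forcing automorphism fixing $V$ pointwise together with a name for $A$ automatically preserves the submodel generated by $A$ and $V$. An alternative route, closer in spirit to Lemma~\ref{lemma: interpolation}, would reason directly in $V[x]$ via the forcing relation: if a condition $p\in\P$ forces $\phi(\vec{B}_1,A,v)^{V(\dot{A})}$, then $\sigma_\eta p$ forces $\phi(\vec{B}_2,A,v)^{V(\dot{A})}$, and a density argument transfers the equivalence to all generic filters and hence to $V(A)$. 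Either way, once the descent to $V(A)$ is justified, the indiscernibility follows essentially by inspection.
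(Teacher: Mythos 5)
There is a genuine gap, and it sits exactly where you wave your hands. Your primary route --- an ``automorphism of $V(A)$ fixing $V$ pointwise and $A$ setwise that sends $\vec{B}_1$ to $\vec{B}_2$'' --- cannot exist: $V(A)$ is a transitive model of ZF, and transitive $\in$-models are rigid (any $\in$-automorphism is the identity by $\in$-induction). A forcing automorphism $\sigma_\eta$ acts on \emph{names} and on the forcing relation ($p\Vdash\phi(\tau)$ iff $\sigma_\eta p\Vdash\phi(\sigma_\eta\tau)$); it does not descend to a map on the extension, since the assignment $\tau[G]\mapsto(\sigma_\eta\tau)[G]$ is not well defined on elements. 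So only your ``alternative route'' through the forcing relation is viable, and it is indeed the paper's skeleton --- but the step ``a density argument transfers the equivalence to all generic filters'' is precisely the missing content. From $V(A)\models\phi(\vec{B}_1,A,v)$ you get some $p$ \emph{in the generic filter determined by $x$} forcing $\phi^{V(\dot A)}(\gamma_1\cdot\dot{\vec A},\dot A,\check v)$, and then $\sigma_\eta p$ forces $\phi^{V(\dot A)}(\gamma_2\cdot\dot{\vec A},\dot A,\check v)$; but $\sigma_\eta p$ need not be compatible with $x$, so you cannot yet evaluate this in the actual model $V(A)$. No abstract density argument closes this, because the statement is not decided by the empty condition (it mentions $\dot{\vec A}$, which the shifts do not fix).

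The paper's repair, which your proof is missing, is to produce a \emph{new} Cohen generic $x'$ that extends $\sigma_\eta p$ while satisfying $x'\mathrel{E^\Gamma}x$ (only finitely many coordinates are changed, each within its $E$-class), so that $\dot{\vec A}$ and $\dot A$ are interpreted identically by $x$ and $x'$ and one may read off $\phi^{V(A)}(\vec{B}_2,A,v)$ in $V[x']$. Finding $x'(\alpha)\in(\sigma_\eta p)(\alpha)\cap[x(\alpha)]_E$ uses that the classes $[x(\alpha)]_E$ are dense, which is where the standing hypothesis that $E$ is \emph{generically ergodic} enters. Your argument never invokes this hypothesis, and the lemma is false without it: for $E$ the equality relation, $\vec A$ essentially codes the generic $x$ itself, and distinct members $\gamma\cdot\vec A$ of $A$ are easily distinguished over $V$ by first-order properties of the reals occupying fixed coordinates. (A minor further point: your closing remark about $n$-tuples only covers tuples lying in a common diagonal $\Gamma$-orbit, which is not full indiscernibility; fortunately the paper only ever applies the lemma to single members of $A$.)
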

\begin{proof}[Proof sketch]
Suppose $V(A)\models\phi(A,A_\gamma,v)$ for a formula $\phi$, a parameter $v\in V$ and some $\gamma\in \Gamma$. Fix $\delta\in\Gamma$. We need to show that $V(A)\models\phi(A,A_\delta,v)$.

Let $\Q$ be Cohen forcing for $X^\Gamma$ (which added the generic $x$). Fix $p\in\Q$ compatible with $x$, forcing that $\phi(\dot{A},\dot{A}_\gamma,\check{v})$.
Note that $\delta\gamma^{-1}$ acts as an automorphism of $\Q$ (by permuting the indices), fixing the name $\dot{A}$ (but not fixing $\dot{\vec{A}}$), sending $p$ to some $q$ which forces $\phi^{V(\dot{A})}(\dot{A},\dot{A}_{\delta},v)$.

The issue is that $q$ may not be compatible with $x$. Nevertheless, since $E$ is generically ergodic, we can change $x$ to some $x'$ so that $x'\mathrel{E^\Gamma}x$ (in fact we only change finitely many coordinates of $x$) and so that $x'$ extends $q$.

Note that $\dot{A}$ and $\dot{A}_\zeta$, for any $\zeta\in\Gamma$, are interpreted the same by the generics $x$ and $x'$. In particular, working in $V[x']$, since $q$ is in the generic, we conclude that $\phi^{V(A)}(A,A_\delta,v)$ holds, as required.
\end{proof}


\begin{defn}\label{defn: P,tau in V(A)}
In $V(A)$, let $\P$ be the poset of all conditions $p$ with $\dom p\subset \Gamma$ finite, $1_\Gamma\in\dom p$, and there is some $\vec{B}\in A$ such that $p\in\prod_{\gamma\in\dom p}\vec{B}_\gamma$. 

In other words, a condition $p$ chooses an origin $\gamma_0$ and then approximates a choice function through $\gamma_0\cdot \vec{A}$.
For $p\in\P$, let $\vec{A}(p)$ be the unique member of $A$ with $p(1_\Gamma)\in \vec{A}(p)_{1_\Gamma}$.
For two conditions $p,q\in\P$, say that $p$ extends $q$ if $p$ extends $q$ as a function. Note that it implies that  $\vec{A}(p)=\vec{A}(q)$.

Given a generic filter $G\subset\P$ over $V(A)$, $\bigcup G$ defines a choice function in a unique sequence in $A$.
Let $\tau$ be the name for this choice function $\bigcup G$. 
Then $\P\force A_\tau = A$.
In particular, $(\P,\tau)$ is an $E^{[\Gamma]}$-pin in $V(A)$.
\end{defn}

Note that in the Cohen-real extension $V[x]$ the poset $\P$ is countable (in particular reasonable). Also, $\P$ and $\tau$ are definable from $A$ over $V$. We are now in position to use Lemma~\ref{lem;symm-model-ergod}.


\section{Proof of Theorem~\ref{thm: main technical gen-erg thm}}

\begin{remark}
For an equivalence relation $E'$,
$E'$ is generically $F$-ergodic if and only if $E'$ is generically $F^{\Delta}$-ergodic. This is because a homomorphism to the product relation $F^\Delta$ can be identified with a $\Delta$-sequence of homomorphisms to $F$.
Homomorphisms to $F^{[\Delta]}$ cannot, in general, be understood so easily.
The main point in the proof of the theorem will be to show that, in these circumstances, a homomorphism to $F^{[\Delta]}$ can be ``converted'' to a homomorphism to $F^\Delta$, after restricting to a comeager set.
\end{remark}

Let $F$ be an analytic equivalence relation on a Polish space $Y$. Assume that for any group homomorphism $\phi$ from $\Gamma$ to a quotient of a subgroup of $\Delta$, its image is finite, and $E^{[\ker\phi]}$ is generically $F$-ergodic, for any generically ergodic countable Borel equivalence relation $E$.
Fix a generically ergodic countable Borel equivalence relation $E$ and consider the model $V(A)$ as in Section~\ref{section: a symmetric model}. We need to show that $E^{[\Gamma]}$ is generically $F^{[\Delta]}$-ergodic.
By Lemma~\ref{lem;symm-model-ergod}, it suffices to prove the following.
\begin{prop}
Suppose $\sigma$ is in $V(A)$, definable from $A$ over $V$, and $(\P,\sigma)$ is an $F^{[\Delta]}$-pin. Then there is $y_0\in Y^\Delta$ in $V$ such that $\P\force \sigma\mathrel{F^{[\Delta]}}\check{y}_0$.
\end{prop}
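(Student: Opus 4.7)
The plan is to use Lemma~\ref{lemma: pins from jump and product} to fix $q_0\in\P$ with $(\P\restriction q_0,\sigma)$ an $F^\Delta$-pin; by the indiscernibility of $A$ in $V(A)$ (Lemma~\ref{lemma: indiscernibility V(A)}) we may assume $\vec{A}(q_0)=\vec{A}$. From the $F^{[\Delta]}$-pinnedness above $q_0$ I will extract a group homomorphism $\phi:\Gamma\to\Delta_0/N$, where $N\triangleleft\Delta_0\le\Delta$, to which the theorem's hypothesis applies directly, yielding that $K:=\ker\phi$ has finite index in $\Gamma$ and that $E'^{[K]}$ is generically $F$-ergodic for every generically ergodic countable Borel $E'$.

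To build $\phi$, for each $\gamma\in\Gamma$ and any pair of mutually generic filters $G_0,G_\gamma\subset\P$ with $\vec{A}(G_0)=\vec{A}$ and $\vec{A}(G_\gamma)=\gamma\cdot\vec{A}$, the pinnedness yields $\delta_\gamma\in\Delta$ with $\delta_\gamma\cdot\sigma[G_0]\mathrel{F^\Delta}\sigma[G_\gamma]$, unique modulo the stabilizer $N:=\{\delta\in\Delta:\delta\cdot\sigma[G_0]\mathrel{F^\Delta}\sigma[G_0]\}$. The key point is that $N$, as a subset of the ground-model set $\Delta$, is independent of which base point $\vec{B}\in A$ is used to compute it (by indiscernibility); but the stabilizer computed from $\gamma\cdot\vec{A}$ is directly $\delta_\gamma N\delta_\gamma^{-1}$, forcing $\delta_\gamma$ to normalize $N$. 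Setting $\Delta_0:=\langle N,\{\delta_\gamma:\gamma\in\Gamma\}\rangle$ gives $N\triangleleft\Delta_0$, and a cocycle computation over three mutually generic $G_0,G_{\gamma_1},G_{\gamma_1\gamma_2}$ yields $\delta_{\gamma_1\gamma_2}N=\delta_{\gamma_2}\delta_{\gamma_1}N$, so $\phi(\gamma):=\delta_{\gamma^{-1}}N$ is a genuine group homomorphism $\Gamma\to\Delta_0/N$.

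Next, define the Borel map $f:X^\Gamma\to Y^\Delta$ by $f(x):=\sigma[x]$, interpreting $\sigma$ via the natural $\P$-generic over $V(A_x)$ (below $x\restriction F_0$) provided by $x$ itself. If $\vec{A}_{x'}=\gamma\cdot\vec{A}_x$ for some $\gamma\in K$, then $A_{x'}=A_x$ and the pinnedness gives $\sigma[x']\mathrel{F^\Delta}\delta_\gamma\cdot\sigma[x]\mathrel{F^\Delta}\sigma[x]$, the second step because $\gamma\in K$ forces $\delta_\gamma\in N$. The resulting equivalence relation on $X^\Gamma$ is, via the coset decomposition $\Gamma=\bigsqcup_{i<n}K\gamma_i$ with $n:=[\Gamma:K]$, Borel-isomorphic to $(E^n)^{[K]}$ on $(X^n)^K$. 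Since $E^n$ is a generically ergodic countable Borel equivalence relation, $(E^n)^{[K]}$ is generically $F$-ergodic by the hypothesis, hence generically $F^\Delta$-ergodic, so $f$ is constant modulo $F^\Delta$ on a comeager set. By $\Sigma^1_1$-absoluteness applied to the nonempty analytic $F^\Delta$-class of the generic value $f(x)$, some $y_0\in Y^\Delta\cap V$ satisfies $\P\restriction q_0\force\sigma\mathrel{F^\Delta}\check{y}_0$; and for any generic $H\subset\P$, taking $H'\subset\P\restriction q_0$ mutually generic over $V(A)[H]$ gives $\sigma[H]\mathrel{F^{[\Delta]}}\sigma[H']\mathrel{F^\Delta}y_0$, as required. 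The hardest step will be the indiscernibility analysis producing a genuine group homomorphism $\phi$ (rather than merely a cocycle into a coset space), which crucially depends on verifying that $N$ is normalized by each $\delta_\gamma$.
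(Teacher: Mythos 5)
Your overall strategy is the same as the paper's: extract from the $F^{[\Delta]}$-pin a group homomorphism $\Gamma\to\Delta_0/N$ into a quotient of a subgroup of $\Delta$, invoke the hypothesis to get that the kernel $K$ has finite index, and then reduce to generic $F^\Delta$-ergodicity of an $(E^R)^{[K]}$-type relation. But there is a genuine gap at the very first step of the construction of $\phi$: the coset $\delta_\gamma N$ is not well-defined as a function of $\gamma$. The $F^{[\Delta]}$-pinnedness of $(\P,\sigma)$ only guarantees that for two generics $G_\gamma, G'_\gamma$ with $\vec{A}(G_\gamma)=\vec{A}(G'_\gamma)=\gamma\cdot\vec{A}$ the values $\sigma[G_\gamma]$ and $\sigma[G'_\gamma]$ are $F^{[\Delta]}$-related --- they need \emph{not} be $F^\Delta$-related. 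Equivalently, two conditions $p_0,p_1$ with the same domain and $\vec{A}(p_0)=\vec{A}(p_1)$ can each make $(\P\restriction p_i,\sigma)$ an $F^\Delta$-pin while the two pins are not $F^\Delta$-equivalent. Replacing $G_\gamma$ by such a $G'_\gamma$ shifts your connecting element by a $\delta\notin N$, so $\delta_\gamma N$ changes. The same defect propagates: $N$ itself, your normalization claim, the cocycle identity, and the assertion that $f(x')\mathrel{F^\Delta}\delta_\gamma\cdot f(x)$ for the specific generics $x,x'$ all silently assume this well-definedness.

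The paper's proof devotes its central portion precisely to repairing this: using the Interpolation Lemma (Lemma~\ref{lemma: interpolation}) it produces a predense set $\P^\ast\subset\P$, definable from $A$ and a parameter in $V$ alone, on which the induced $F^\Delta$-pin is constant up to $F^\Delta$-equivalence for each fixed shift $\gamma\cdot\vec{A}$. Only after restricting to $\P^\ast$ is the relation ``$\delta$ connects $\gamma$'' well-defined and, crucially, definable from $A$ over $V$ --- which is also what makes the indiscernibility arguments you invoke legitimate (Lemma~\ref{lemma: indiscernibility V(A)} applies to statements about $A$ and a member of $A$ with parameters from $V$, not to statements mentioning an arbitrary condition or generic filter). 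Without some analogue of this step your homomorphism $\phi$ and your map $f$ do not exist as described. A secondary, lesser issue: your closing appeal to ``$\Sigma^1_1$-absoluteness'' to produce $y_0\in Y^\Delta\cap V$ is too quick --- the $F^\Delta$-class of $f(x)$ has a code in $V[x]$, not in $V$; landing a representative in $V$ is exactly the content of direction (1)$\implies$(2) of Lemma~\ref{lem;symm-model-ergod}, which the paper proves via triviality of pins over reasonable posets rather than by absoluteness, and which you should cite rather than re-derive.
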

We prove this by showing that  $(\P,\sigma)$ is essentially an $F^\Delta$-pin, then using ergodicity with respect to $F^\Delta$, together with Lemma~\ref{lem;symm-model-ergod}.

By Lemma~\ref{lemma: pins from jump and product} there is a condition $p\in\P$ such that $(\P\restriction p,\sigma)$ is an $F^\Delta$-pin.
Without loss of generality, assume that $\vec{A}(p)=\vec{A}$.
Let $\bar{\Gamma}$ be the domain of $p$.
Consider now the statement: ``there is some $p\in\P$ with $\vec{A}(p)=\vec{A}$ and $\dom p=\bar{\Gamma}$ such that $(\P\restriction p,\sigma)$ is an $F^\Delta$-pin''.
By the Indiscernibility Lemma~\ref{lemma: indiscernibility V(A)}, this statement holds for any $\gamma\cdot\vec{A}$ in $A$.

There could be different conditions $p_0,p_1$ with the same domain $\bar{\Gamma}$ satisfying $\vec{A}(p_0)=\vec{A}(p_1)$ and that $(\P\restriction p_i,\sigma)$ is an $F^\Delta$-pin, yet the two pins $(\P\restriction p_0,\sigma)$ and $(\P\restriction p_1,\sigma)$ are not $F^\Delta$ equivalent to one another. 
We will now restrict to a definable subset of such conditions which give the same $F^\Delta$ pin, up to $F^\Delta$-equivalence.

Since $\Gamma$ is infinite, we may find $\gamma\in\Gamma$ with $\gamma\cdot\bar{\Gamma}$ and $\bar{\Gamma}$ disjoint. 
Fix $p,q\in\P$ with domains $\bar{\Gamma}$ such that $\vec{A}(q)=\gamma\cdot\vec{A}$, $\vec{A}(p)=\vec{A}$ and $(\P\restriction q,\sigma)$, $(\P\restriction p,\sigma)$ are $F^\Delta$ pins.
Since $(\P\restriction q,\sigma) \mathrel{F^{[\Delta]}} (\P\restriction p,\sigma)$,
there is $\delta\in\Delta$ such that
\begin{equation*}
    (\P\restriction q,\sigma) \mathrel{F^\Delta} (\P\restriction p,\delta\cdot \sigma).
\end{equation*}
Suppose now $p'\in\P\restriction\bar{\Gamma}$ satisfies $\vec{A}(p')=\vec{A}$ and $(\P\restriction p',\sigma)$ is an $F^\Delta$-pin. Then the following are equivalent:
\begin{enumerate}
    \item $(\P\restriction p',\sigma) \mathrel{F^\Delta} (\P\restriction p,\sigma)$;
    \item $(\P\restriction q,\sigma) \mathrel{F^\Delta} (\P\restriction p',\delta\cdot \sigma)$.
\end{enumerate}
The set of $p'$ for which (1) holds, is definable on the one hand using the parameter $p$ from $\vec{A}_{\bar{\Gamma}}$, and on the other hand using the parameter $q$ from $(\gamma\cdot\vec{A})_{\bar{\Gamma}}$, via clause (2). 
By Lemma~\ref{lemma: interpolation}, there is a formula $\psi$ and a parameter $w\in V$ such that (1) holds if and only if $\psi^{V(A)}(p',\vec{A},w)$ holds.

Working in $V(A)$, we now have a formula $\psi(p',\vec{A},w)$ so that for any $p_1,p_2$, if $\psi(p_i,\Vec{A},w)$ holds for $i=1,2$, then $p_i\in \P\restriction\bar{\Gamma}$, $\vec{A}(p_i)=\vec{A}$, and $(\P\restriction p_1,\sigma) \mathrel{F^\Delta} (\P\restriction p_2,\sigma)$.
It follows by indiscernibility that for any $\gamma\in\Gamma$, for any $p_1,p_2$, if $\psi(p_i,\gamma\cdot\vec{A},w)$ hold, then $p_i\in \P\restriction\bar{\Gamma}$, $\vec{A}(p_i)=\gamma\cdot\vec{A}$, and $(\P\restriction p_1,\sigma) \mathrel{F^\Delta} (\P\restriction p_2,\sigma)$.

Let $\P^\ast$ be all $p\in\P$ such that $\psi(p,\gamma\cdot\vec{A},w)$ holds for some $\gamma$.
Note that $\P^\ast$ is definable from $A$ and parameters in $V$ alone.
$\P^\ast$ is pre-dense in $\P$. In particular, we may identify $\sigma$ as a $\P^\ast$-name, and for $p\in\P^\ast$ the pins $(\P\restriction p,\sigma)$ and $(\P^\ast\restriction p,\sigma)$ are $F^\Delta$-equivalent.

For $p_1,p_2\in\P^\ast$, say that $\delta$ \textbf{connects} $\mathbf{p_2}$ to $\mathbf{p_1}$ if
\begin{equation*}
    (\P\restriction p_1,\sigma) \mathrel{F^\Delta} (\P\restriction p_2,\delta\cdot \sigma).
\end{equation*}
Fix $\vec{B}\in A$ (a shift of $\vec{A}$) and conditions $p,q$ with $\psi(p,\vec{B},w)$ and $\psi(q,\gamma\cdot\vec{B},w)$. 
If $\delta$ connects $p$ to $q$, then for any $p',q'$ satisfying $\psi(p',\vec{B},w)$ and $\psi(q',\gamma\cdot\vec{B},w)$, $\delta$ connects $p'$ to $q'$. In this case, say that $\delta$ \textbf{connects} $\vec{\mathbf{{B}}}$ to $\boldsymbol{\gamma}\cdot\vec{\mathbf{{B}}}$.
The statement ``$\delta$ connects $\vec{B}$ to $\gamma\cdot\vec{B}$'' is a statement about $\vec{B}$ involving only $A$ and parameters in $V$.
By indiscernibility, it holds for any $\vec{C}\in A$.
Finally, say that $\boldsymbol{\delta}$ \textbf{connects} $\boldsymbol{\gamma}$ if $\delta$ connects $\vec{A}$ to $\gamma\cdot\vec{A}$. 
By the discussion above, $\delta$ connects $\gamma$ if and only if $\delta$ connects $\vec{B}$ to $\gamma\cdot\vec{B}$, for any $\vec{B}\in A$.

\begin{prop}
\begin{enumerate}
    \item If $\delta$ connects $\gamma$ then $\delta^{-1}$ connects $\gamma^{-1}$;
    \item If $\delta_i$ connects $\gamma_i$ for $i=1,2$, then $\delta_1\delta_2$ connects $\gamma_1\gamma_2$.
\end{enumerate}
\end{prop}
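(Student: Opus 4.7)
The plan is to unfold the predicate ``connects'' in forcing terms and apply two ingredients: the Indiscernibility Lemma~\ref{lemma: indiscernibility V(A)}, which lets us shift the base point of any connection from $\vec{A}$ to any $\vec{B}\in A$, and the fact that the shift action of $\Delta$ preserves $F^\Delta$ (since $F^\Delta$ is the coordinate-wise relation and the action only permutes coordinates). Concretely, ``$\delta$ connects $\gamma$'' is equivalent to: for every $\vec{B}\in A$, every $p,q\in\P^\ast$ with $\psi(p,\vec{B},w)$ and $\psi(q,\gamma\cdot\vec{B},w)$, and every pair of mutually generic filters $G_p\subset\P\restriction p$ and $G_q\subset\P\restriction q$, $\sigma[G_q]\mathrel{F^\Delta}\delta\cdot\sigma[G_p]$.

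For (1), I would instantiate ``$\delta$ connects $\gamma$'' at the base point $\vec{B}:=\gamma^{-1}\cdot\vec{A}$, so that $\gamma\cdot\vec{B}=\vec{A}$. This gives $\sigma[G_q]\mathrel{F^\Delta}\delta\cdot\sigma[G_p]$ for $p\in\psi(\cdot,\gamma^{-1}\cdot\vec{A},w)$ and $q\in\psi(\cdot,\vec{A},w)$ with $G_p,G_q$ mutually generic. Acting by $\delta^{-1}$ on both sides (using $\Delta$-invariance of $F^\Delta$) and using symmetry of $F^\Delta$ yields $\sigma[G_p]\mathrel{F^\Delta}\delta^{-1}\cdot\sigma[G_q]$, which is precisely the statement that $\delta^{-1}$ connects $\vec{A}$ to $\gamma^{-1}\cdot\vec{A}$, i.e.\ $\delta^{-1}$ connects $\gamma^{-1}$.

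For (2), I would run a three-step chain through an auxiliary middle base point. Pick conditions $p_1\in\psi(\cdot,\vec{A},w)$, $p_2\in\psi(\cdot,\gamma_2\cdot\vec{A},w)$, and $p_3\in\psi(\cdot,(\gamma_1\gamma_2)\cdot\vec{A},w)$, and take filters $G_1,G_2,G_3$ through them that are pairwise mutually generic (obtained by the standard iterated construction). From ``$\delta_2$ connects $\gamma_2$'' with base $\vec{A}$ we get $\sigma[G_2]\mathrel{F^\Delta}\delta_2\cdot\sigma[G_1]$, and from ``$\delta_1$ connects $\gamma_1$'' shifted via indiscernibility to base $\gamma_2\cdot\vec{A}$, together with $\gamma_1\cdot(\gamma_2\cdot\vec{A})=(\gamma_1\gamma_2)\cdot\vec{A}$, we get $\sigma[G_3]\mathrel{F^\Delta}\delta_1\cdot\sigma[G_2]$. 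Acting by $\delta_1$ on the first relation and chaining with the second yields $\sigma[G_3]\mathrel{F^\Delta}\delta_1\delta_2\cdot\sigma[G_1]$, which is precisely ``$\delta_1\delta_2$ connects $\gamma_1\gamma_2$''.

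The argument is essentially bookkeeping, and I do not expect a real obstacle. The only point requiring mild attention is the existence of pairwise mutually generic filters through the chosen conditions in $\P^\ast$; this follows from iterated mutual genericity together with the fact that, for each $\vec{B}\in A$, there exists some $p\in\P^\ast$ with $\psi(p,\vec{B},w)$ (which is built into the definition of $\P^\ast$).
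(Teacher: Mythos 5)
Your proof is correct and follows essentially the same route as the paper: both arguments reduce to the base-point independence of ``connects'' (via the Indiscernibility Lemma), the $\Delta$-shift-invariance of $F^\Delta$, and chaining through an intermediate shift of $\vec{A}$; you merely unfold the pin relation into explicit mutually generic filters where the paper argues directly at the level of pins.
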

\begin{proof}
(1) By assumption, $\delta$ connects $\vec{A}$ to $\gamma\cdot\vec{A}$. Therefore $\delta^{-1}$ connects $\gamma\cdot\vec{A}$ to $\vec{A}=\gamma^{-1}\cdot(\gamma\cdot\vec{A})$, and so $\delta^{-1}$ connects $\gamma^{-1}$.

(2) Since $\delta_i$ connects $\gamma_i$, $\delta_2$ connects $\vec{A}$ to $\gamma_2\cdot\vec{A}$ and $\delta_1$ connects $\gamma_2\cdot\vec{A}$ to $\gamma_1\cdot(\gamma_2\cdot\vec{A})$. It follows that $\delta_1\delta_2$ connects $\vec{A}$ to $\gamma_1\gamma_2\cdot\vec{A}$.
\end{proof}

Let $\tilde{\Delta}$ be all $\delta\in\Delta$ for which there is some $\gamma\in\Gamma$ such that $\delta$ connects $\gamma$.
For $\gamma\in\Gamma$, let $H_\gamma$ be all $\delta\in\tilde{\Delta}$ such that $\delta$ connects $\gamma$. Let $H=H_{1_\Gamma}$.
The previous proposition implies the following.
\begin{prop}
\begin{itemize}
    \item $\tilde{\Delta}$ is a subgroup of $\Delta$;
    \item $H$ is a normal subgroup of $\tilde{\Delta}$;
    \item Each $H_\gamma$ is a coset of $H$;
    \item The map $\gamma\mapsto H
    _\gamma$ is a group homomorphism from $\Gamma$ to $\tilde{\Delta}/H$.
\end{itemize}
\end{prop}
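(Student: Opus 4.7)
The plan is to deduce all four bulleted claims as formal consequences of the previous proposition, once two auxiliary facts are established: that $1_\Delta$ connects $1_\Gamma$, and that $H_\gamma \neq \emptyset$ for every $\gamma \in \Gamma$.

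For $1_\Delta$ connecting $1_\Gamma$, I note that by the very definition of $\P^*$ via the formula $\psi$, for any $p_1,p_2 \in \P^*$ with $\vec A(p_i) = \vec A$ the pins $(\P\restriction p_1,\sigma)$ and $(\P\restriction p_2,\sigma)$ are $F^\Delta$-equivalent, which is precisely the statement that $1_\Delta$ connects $\vec A$ to $1_\Gamma \cdot \vec A = \vec A$. For nonemptiness of $H_\gamma$, fix $\gamma \in \Gamma$ and choose $p,q \in \P^*$ with $\vec A(p) = \vec A$ and $\vec A(q) = \gamma \cdot \vec A$ (such conditions exist by applying indiscernibility to $\vec B = \gamma \cdot \vec A \in A$). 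Since $(\P,\sigma)$ is an $F^{[\Delta]}$-pin, the restrictions $(\P\restriction p,\sigma)$ and $(\P\restriction q,\sigma)$ are $F^{[\Delta]}$-equivalent; repeating the argument used to produce the original connecting element just before the definition of $\P^*$, we obtain some $\delta \in \Delta$ witnessing that $\delta$ connects $\gamma$, so $H_\gamma \neq \emptyset$.

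With these in hand the four assertions are straightforward. That $\tilde\Delta$ is a subgroup of $\Delta$ follows from $1_\Delta \in \tilde\Delta$ together with parts (1) and (2) of the previous proposition applied to arbitrary elements. That $H$ is a subgroup of $\tilde\Delta$ is the same argument specialized to $\gamma_1 = \gamma_2 = 1_\Gamma$. For normality, given $h \in H$ and $\varepsilon \in \tilde\Delta$ connecting some $\gamma_0 \in \Gamma$, parts (1) and (2) combine to show that $\varepsilon h \varepsilon^{-1}$ connects $\gamma_0 \cdot 1_\Gamma \cdot \gamma_0^{-1} = 1_\Gamma$, and hence lies in $H$. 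For the coset claim, fix any $\delta_0 \in H_\gamma$: then $\delta_0 H \subseteq H_\gamma$ by part (2), and $H_\gamma \subseteq \delta_0 H$ since for any $\delta \in H_\gamma$ parts (1) and (2) yield that $\delta_0^{-1}\delta$ connects $\gamma^{-1}\gamma = 1_\Gamma$, i.e.\ lies in $H$. Finally, the homomorphism property follows because $\delta_1\delta_2 \in H_{\gamma_1\gamma_2}$ whenever $\delta_i \in H_{\gamma_i}$ by part (2), so $H_{\gamma_1}H_{\gamma_2} \subseteq H_{\gamma_1\gamma_2}$, and equality holds since both sides are cosets of $H$.

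The only nontrivial step in this plan is the verification that $H_\gamma \neq \emptyset$ for every $\gamma \in \Gamma$, which forces us to unpack the $F^{[\Delta]}$-pin structure of $(\P,\sigma)$ together with the $F^\Delta$-pin structure on restrictions to $\P^*$; everything after that is routine group theoretic bookkeeping using parts (1) and (2) of the previous proposition.
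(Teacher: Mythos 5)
Your proof is correct and is exactly the deduction the paper intends: the paper simply asserts that this proposition follows from the preceding one and omits all details. You correctly supply the two facts left implicit there, namely that $1_\Delta$ connects $1_\Gamma$ (read off from the definition of $\psi$) and that each $H_\gamma$ is nonempty (extracted from the $F^{[\Delta]}$-pin property of $(\P,\sigma)$ by the same argument the paper uses to produce its original connecting $\delta$); the remaining group-theoretic bookkeeping from parts (1) and (2) of the previous proposition is routine and carried out correctly.
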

By assumption, the group homomorphism $\gamma\mapsto H_\gamma$ has finite image.

Assume first that this group homomorphism is \emph{in fact trivial}, that is, $H_\gamma=H$ for all $\gamma\in\Gamma$. (This is necessarily the case, for example, if $\Gamma=\mathbb{Z}_p^{<\omega}$ and $\Delta=\mathbb{Z}_q^{<\omega}$ for distinct primes $p$ and $q$.) Then $1_\Delta$ connects $\gamma$, for any $\gamma\in\Gamma$. It follows that for any $p,q\in\P^\ast$, $(\P\restriction q,\sigma) \mathrel{F^\Delta} (\P\restriction p,\sigma)$. Since $\P^\ast$ is predense in $\P$, we conclude that $(\P,\sigma)$ is in fact an $F^\Delta$-pin.
By assumption, $E^{[\Gamma]}$ is generically $F$-ergodic, and therefore generically $F^\Delta$-ergodic as well. 
Applying (1)$\implies$(2) of Lemma~\ref{lem;symm-model-ergod} for $E^{[\Gamma]}$ and $F^\Delta$, we find some $y_0\in Y^\Delta$ in $V$ such that $\P\force \sigma\mathrel{F^\Delta}\check{y}_0$. In particular, $\P\force \sigma\mathrel{F}^{[\Delta]}\check{y}_0$.
Applying (2)$\implies$(1) of Lemma~\ref{lem;symm-model-ergod} for $E^{[\Gamma]}$ and $F^{[\Delta]}$, we conclude that $E$ is generically $F^{[\Delta]}$-ergodic.

For the general case, when the group homomorphism is not trivial, we employ the following lemma.

\subsection{A lemma}
Let $E$ be an equivalence relation on $X$, $\Gamma$ a countable group.
Suppose $K\vartriangleleft\Gamma$ is a normal subgroup with finite index. Let $R\subset\Gamma$ be a (finite) set of representatives of the cosets.
Define a map $f\colon (X^R)^{K}\to X^{\Gamma}$ as follows. For $x\in (X^R)^{K}$, $\gamma\in K$ and $a\in R$,
\begin{equation*}
    f(x)(\gamma\cdot a)=x(\gamma)(a).
\end{equation*}
Note that $f$ is a homeomorphism from $(X^R)^{K}$ to $X^\Gamma$.
\begin{lemma}\label{lemma: technical jumps normal subgroup}
Suppose $x,y\in (X^R)^{K}$, $\zeta\in K$, and $\zeta\cdot x\mathrel{(E^R)^{K}}y$. Then $\zeta\cdot f(x) \mathrel{E}^\Gamma f(y)$. 
In particular, $f$ is a homomorphism from $(E^R)^{[K]}$ to $E^{[\Gamma]}$.
\end{lemma}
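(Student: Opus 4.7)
The plan is to verify both assertions by a direct coordinate-wise unpacking of the shift actions and product equivalences, using the fact that $R$ being a transversal for the right cosets $K\backslash\Gamma$ gives a unique decomposition $\beta=\eta\cdot a$ with $\eta\in K$ and $a\in R$. The whole content of the lemma is that left multiplication by an element of $K$ preserves the $R$-component of this decomposition, so the ``$K$-shift'' of $(X^R)^K$ is faithfully reproduced inside the ``$\Gamma$-shift'' of $X^\Gamma$ under $f$.

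First I would fix notation. By the transversal property, the map $(\eta,a)\mapsto \eta\cdot a$ is a bijection $K\times R\to\Gamma$, which makes $f$ well-defined and a homeomorphism. I would unfold the hypothesis $\zeta\cdot x\mathrel{(E^R)^K} y$ into the coordinate form
\begin{equation*}
(\forall \eta\in K)(\forall a\in R)\quad x(\zeta^{-1}\eta)(a)\mathrel{E}y(\eta)(a),
\end{equation*}
using $(\zeta\cdot x)(\eta)=x(\zeta^{-1}\eta)$ and the definition of $E^R$ on $X^R$. Similarly, the desired conclusion $\zeta\cdot f(x)\mathrel{E^\Gamma}f(y)$ is the statement $(\forall\beta\in\Gamma)\ f(x)(\zeta^{-1}\beta)\mathrel{E}f(y)(\beta)$.

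The heart of the argument is then the key calculation: fix $\beta\in\Gamma$ and write $\beta=\eta\cdot a$ with $\eta\in K,a\in R$. Since $\zeta\in K$, $\zeta^{-1}\eta\in K$, so $\zeta^{-1}\beta=(\zeta^{-1}\eta)\cdot a$ is the unique such decomposition of $\zeta^{-1}\beta$. The definition of $f$ then yields
\begin{equation*}
f(x)(\zeta^{-1}\beta)=x(\zeta^{-1}\eta)(a)\quad\text{and}\quad f(y)(\beta)=y(\eta)(a),
\end{equation*}
and the coordinate-wise form of the hypothesis supplies the required $E$-relation. This proves the first assertion.

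The second assertion is immediate: if $x\mathrel{(E^R)^{[K]}} y$, pick $\zeta\in K$ with $\zeta\cdot x\mathrel{(E^R)^K} y$; the first assertion then gives $\zeta\cdot f(x)\mathrel{E^\Gamma}f(y)$, so $f(x)\mathrel{E^{[\Gamma]}}f(y)$ by the definition of the $\Gamma$-jump. I do not expect any real obstacle here; the only point requiring care is the convention for left/right cosets and the shift action, and the single structural fact actually used is that multiplication by $\zeta^{-1}$ on the left fixes the $R$-coordinate of the decomposition, which is precisely what restricts the shift to $K$ (normality of $K$ is not used in this particular lemma, but is part of the ambient setup).
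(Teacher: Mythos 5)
Your proof is correct and follows essentially the same route as the paper: unpack the hypothesis coordinate-wise, decompose an arbitrary $\gamma\in\Gamma$ as $\gamma_0\cdot a$ with $\gamma_0\in K$, $a\in R$, and use that left multiplication by $\zeta^{-1}\in K$ preserves the $R$-component. Your side remark that normality of $K$ is not needed here (only the coset decomposition) is accurate.
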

Here, $E^R$ is the point-wise product equivalence relation.
\begin{proof}
Suppose $x,y\in (X^R)^{K}$, $\zeta\in K$ such that $\zeta\cdot x\mathrel{(E^R)^{K}}y$. That is,
\begin{equation*}
    x(\zeta^{-1}\gamma)(a)\mathrel{E}y(\gamma)(a)
\end{equation*}
for any $\gamma\in K$ and $a\in R$. 
Given $\gamma\in \Gamma$, fix $a\in R$ and $\gamma_0\in K$ such that $\gamma=\gamma_0\cdot a$. Then
\begin{equation*}
    f(y)(\gamma)=f(y)(\gamma_0\cdot a)=y(\gamma_0)(a)\mathrel{E}x(\zeta^{-1}\gamma_0)(a)=f(x)(\zeta^{-1}\gamma_0\cdot a)=f(x)(\zeta^{-1}\gamma).
\end{equation*}
That is, $\zeta\cdot f(x)\mathrel{E^\Gamma}f(y)$. 
\end{proof}

\subsection{Conclusion of the proof}

We now return to the proof. Let $K$ be the Kernel of the homomorphism $\gamma\mapsto H_\gamma$. 
Define $\Q=\set{p\in \P}{\vec{A}(p)=\gamma\cdot\vec{A}\textrm{ for some }\gamma\in K}$, and $\Q^\ast=\Q\cap \P^\ast$.
Then $\Q^\ast$ is pre-dense in $\Q$, and any $\Q$-generic can be identified as a $\P$-generic.
In fact, for any condition $q\in\Q$, $\P\restriction q = \Q\restriction q$.
We may therefore consider $\tau$ and $\sigma$ as a $\Q^\ast$-name as well.
By the construction above, for any two conditions $p,q\in\Q^\ast$, $(\Q^\ast\restriction p,\sigma)$ and $(\Q^\ast\restriction q, \sigma)$ are $F^\Delta$-related. In other words, $(\Q^\ast,\sigma)$ is an $F^\Delta$-pin.

By assumption, $K$ has finite index.
Fix a finite set $R\subset \Gamma$ of representatives of the cosets of $K$.
Let $f\colon (X^R)^K\to X^\Gamma$ be the homeomorphism as above.
The product equivalence relation $E^R$ on $X^R$ is a generically ergodic countable Borel equivalence relation.
By assumption, $(E^R)^{[K]}$ is generically $F$-ergodic.

Recall that $A=A_x$ for a Cohen generic $x\in X$. Let $x'=f^{-1}(x)$, a Cohen generic element in $(X^R)^K$.
Let $A'$ be the $(E^R)^{[K]}$ -classifying invariant of $x'$.
Recall that it is defined as follows. Let $A'_\gamma=[x'(\gamma)]_{E^R}$ for $\gamma\in K$. We may identify $A'_\gamma$ with the finite sequence $\seqq{A_{\gamma\cdot a}}{a\in R}$. Let $\vec{A}'=\seqq{A'_\gamma}{\gamma\in K}$. Then $A'=K\cdot \vec{A}'=\set{\gamma\cdot\vec{A}'}{\gamma\in K}$.

Note that $V(A')=V(A)=V(\vec{A})$, as $A'$ can be defined from $\vec{A}$. Furthermore, $A$ is definable from $A'$ over $V$, but not the other way around.
More importantly, the poset $\Q^\ast$ is definable from $A'$ over $V$. In fact, let $\P'$ be defined for $A'$ as $\P$ was defined for $A$. Then $\P'$ and $\Q$ are forcing isomorphic and bi-definable from one another.

In conclusion, $(\Q^\ast,\tau)$ is definable from $A'$, and satisfies $\Q^\ast\force A_\tau=A'$. Furthermore, $(\Q^\ast,\sigma)$ is an $F^\Delta$-pin where $\sigma$ is definable from $A'$ over $V$.
By (1)$\implies$(2) of Lemma~\ref{lem;symm-model-ergod} for $E^{[K]}$ and $F^\Delta$, there is $y_0\in Y^\Delta$ such that $\Q^\ast\force \sigma\mathrel{F^\Delta}y_0$. 
Since $(\Q^\ast,\sigma)$ and $(\P,\sigma)$ are $F^{[\Delta]}$-equivalent, we conclude that $\P\force \sigma\mathrel{F^{[\Delta]}}\check{y}_0$, as desired.

\bibliographystyle{alpha}
\bibliography{bibliography}

\end{document}